\newtheorem{thm}{Theorem}
\newtheorem{lem}[thm]{Lemma}
\newtheorem{pro}[thm]{Proposition}
\newtheorem{cor}[thm]{Corollary}
\theoremstyle{definition}
\DeclareMathOperator{\inn}{Inn} \DeclareMathOperator{\perm}{Sym}
 \DeclareMathOperator{\soc}{soc}
\DeclareMathOperator{\aut}{Aut} \DeclareMathOperator{\out}{Out}
 \DeclareMathOperator{\M}{M}
 \DeclareMathOperator{\frat}{Frat}
\DeclareMathOperator{\ssl}{SL}
\DeclareMathOperator{\sym}{Sym}
\DeclareMathOperator{\GL}{GL}
\DeclareMathOperator{\alt}{Alt}
\renewcommand{\emptyset}{\varnothing}
\title[Normal coverings]{Covers and normal covers of finite groups}
\author{Martino Garonzi}
\address{Dipartimento di Matematica,
Via Trieste 63, 35121 Padova, Italy.}
\email{mgaronzi@gmail.com}
\author{Andrea Lucchini}
\address{Dipartimento di Matematica,
Via Trieste 63, 35121 Padova, Italy.}
\email{lucchini@math.unipd.it}
\thanks{Research partially supported
by MIUR-Italy via PRIN Group theory and applications}
\subjclass{20F05}
\keywords{}
\begin{document}
\begin{abstract}
For a finite non cyclic group $G$, let $\gamma(G)$ be the smallest integer $k$ such that
$G$ contains $k$ proper subgroups $H_1,\dots,H_k$ with the property that every element of $G$ is contained in $H_i^g$ for
some $i \in \{1,\dots,k\}$ and $g \in G.$ We prove that if $G$ is a noncyclic permutation group of degree $n,$ then
$\gamma(G)\leq (n+2)/2.$ We then investigate the structure of the groups $G$ with $\gamma(G)=\sigma(G)$
(where $\sigma(G)$ is the size of a minimal cover of $G$) and
of those with $\gamma(G)=2.$
\end{abstract}
\maketitle

\section{Introduction}

Let $G$ be a non-cyclic finite group. A collection $\mathcal C$ of proper subgroups of $G$ is a cover
of $G$ if $\cup_{H\in \mathcal C}H=G;$  it is a minimal cover if $|\mathcal C|$ is as small as possible. A normal cover
has the property that $H^g \in \mathcal C$ for all $H \in \mathcal C,$ $g \in G.$ The covering number of $G,$ denoted
$\sigma(G),$ is the size of a minimal cover, and the normal covering number, denoted
$\gamma(G),$ is the smallest number of conjugacy classes of subgroups in a normal cover of $G.$
If $G$ is cyclic we pose $\sigma(G)=\gamma(G)=\infty,$ with the convention that $n<\infty$ for every integer $n.$

The first question on finite covers  was posed
by Scorza in 1926 \cite{sc} who settled the question which groups are the union of three proper
subgroups.  Cohn's 1994 paper \cite{cohn} brought Scorza's original question again to the forefront of research
in group theory and got the attention of many researchers (see for example  \cite{Bl}, \cite{BEGHM}, \cite{BFS}, \cite{cohn}, \cite{cubo}, \cite{mg}, \cite{gl} \cite{gm}, \cite{attila}, \cite{tom}).

The study of normal covers is an off-shoot of the finite covering problem
and relatively new (\cite{bbh}, \cite{mb},  \cite{blw}, \cite{bp}).
The first available results seem to indicate that the arguments used to investigate $\sigma(G)$ fail
when applied to the study of $\gamma(G)$ and this second invariant seems more difficult to be estimated.
For example, by the main result in Tomkinson's paper  \cite[Theorem 2.2.]{tom}, if $G$ is a finite soluble group then
$\sigma(G)=|W|+1,$ where $W$ is a chief factor of $G$ with least order among chief factors of $G$ with multiple complements;
in particular $\sigma(G)-1$ is a prime power. A similar formula for $\gamma(G)$ when $G$ is soluble is missing and in any case
$\gamma(G)$ has a surprisingly different behavior: for every $n\geq 2$, there exists a finite soluble group $G$ with $\gamma(G)=n$
\cite{cl}.

In this paper we address two questions related to the behavior of $\gamma(G)$.
We study the groups $G$ with $\sigma(G)=\gamma(G)$ and those with $\gamma(G)=2.$

In order to deal with the first question we start recalling  a lower bound for $\sigma(G),$ proved by Cohn.
Let $\mu(G)$ be the least integer $k$  such that $G$ has more than one maximal subgroup of index $k.$
Then we have:
\begin{pro}[Cohn \cite{cohn}, Corollary after Lemma 8] If $G$ is a finite group, then $\sigma(G)\geq \mu(G)+1.$
\end{pro}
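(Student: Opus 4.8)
The plan is to reduce to a minimal cover made of maximal subgroups and then to produce two maximal subgroups of equal, suitably small, index. Start from a cover $\{H_1,\dots,H_n\}$ of $G$ with $n=\sigma(G)$. Enlarging each $H_i$ to a maximal subgroup of $G$ keeps a cover, and minimality of $n$ forces these maximal subgroups to be pairwise distinct, so we may assume every $H_i$ is maximal; minimality also allows us to assume the cover is irredundant, i.e.\ no $H_i$ is contained in $\bigcup_{j\neq i}H_j$. Let $d$ be the least of the indices $[G:H_i]$, attained at $H_1$ say.

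The first step is the elementary bound $d\le n-1$. Since $G\setminus H_1=\bigcup_{i\ge 2}(H_i\setminus H_1)$ and, by the second isomorphism theorem, $[H_i:H_i\cap H_1]\le[G:H_1]=d$, we get $|H_i\setminus H_1|\le |H_i|(1-1/d)\le |H_1|(1-1/d)=|G|(d-1)/d^{2}$ for each $i\ge 2$ (using that $|H_1|$ is largest); summing and comparing with $|G\setminus H_1|=|G|(d-1)/d$ yields $d\le n-1$. So $G$ has a maximal subgroup of index $\le n-1$. If $H_1$ is not normal in $G$, its (at least two) conjugates are maximal subgroups of index $d\le n-1$, so $\mu(G)\le d\le n-1=\sigma(G)-1$ and we are done. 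More generally we win as soon as $G$ has two maximal subgroups of equal index $\le n-1$; so, arguing by contradiction, assume from now on that every maximal subgroup of $G$ of index $\le n-1$ is the unique one of its index, hence normal, hence of prime index.

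The heart of the matter is to contradict this configuration, and here is the mechanism I would use. Let $N_1,\dots,N_r$ be the members of the cover that are normal of prime index; their indices $q_1,\dots,q_r$ are then pairwise distinct primes, $N:=N_1\cap\dots\cap N_r$ is normal with $G/N$ cyclic of squarefree order $Q=q_1\cdots q_r$, and in $G/N$ the image of each $N_j$ is the unique subgroup of index $q_j$. The union of those $r$ proper subgroups of the cyclic group of order $Q$ is exactly the set of non-generators, so the $\phi(Q)\ge 1$ generators must be covered by the images of the remaining members of the cover; since a generator lies in no proper subgroup, some member $H$ with $H\notin\{N_1,\dots,N_r\}$ satisfies $HN=G$. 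One then repeats with this supplement absorbed into the family ($H$ itself, or its core, playing the role of a further $N_j$); since the cover is finite the process terminates, and when it does one is driven to produce either a non-normal maximal subgroup of index $\le n-1$ (whose conjugates give two maximal subgroups of that index) or two normal maximal subgroups of a common prime index $\le n-1$, in either case contradicting the standing assumption. Once it is known that $[G:H_i]\le n-1$ for every $i$, the proof closes with a pigeonhole step: the $n\ (\ge 3)$ indices $[G:H_1],\dots,[G:H_n]$ take values in the set $\{2,3,\dots,n-1\}$, which has only $n-2$ elements, so two of them coincide, furnishing two distinct maximal subgroups of $G$ of the same index $k\le n-1$ and hence $\sigma(G)=n\ge k+1\ge\mu(G)+1$. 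I expect the genuine difficulty to be precisely the termination-and-bookkeeping in this descent — keeping the iterated supplements maximal (or replaceable by maximal overgroups) and ruling out that the process merely cycles among subgroups of index $\ge n$ without ever forcing the desired pair.
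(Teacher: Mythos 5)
The paper does not prove this Proposition at all --- it is quoted verbatim from Cohn --- so your attempt can only be measured against the statement itself. Your opening moves are sound: passing to an irredundant minimal cover by $n=\sigma(G)$ pairwise distinct maximal subgroups, the counting argument showing that the smallest index $d$ occurring in the cover satisfies $d\le n-1$, and the reduction to the configuration in which every maximal subgroup of index at most $n-1$ is the unique one of its index, hence normal of prime index. The gap is exactly where you suspect it is: the ``descent'' is not an argument as written. The supplement $H$ you extract (a member of the cover with $HN=G$, where $N=N_1\cap\dots\cap N_r$) is by hypothesis not normal of prime index, so it cannot ``play the role of a further $N_j$''; its core is not a member of the cover, need not be maximal, and may well be trivial; no quantity is shown to decrease, so termination is unestablished; and the dichotomy you claim at termination is asserted rather than derived. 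Two further soft spots: the parenthetical claim that the normal prime-index members of the cover have pairwise distinct indices only follows from your standing assumption when those indices are at most $n-1$; and the intermediate statement ``$[G:H_i]\le n-1$ for every $i$'' on which your pigeonhole endgame rests is neither proved by the descent nor obviously true for minimal covers in general --- and it is not needed.

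The configuration you reduced to can be killed by a single counting step, with no iteration and no pigeonhole. Let $N_1,\dots,N_s$ be the members of the cover of index at most $n-1$; under your standing assumption they are normal of pairwise distinct prime indices $p_1,\dots,p_s$, and $s\ge 1$ precisely because of your bound $d\le n-1$. Since $G/(N_1\cap\dots\cap N_s)$ is cyclic of squarefree order $p_1\cdots p_s$, exactly $|G|\prod_i(1-1/p_i)>0$ elements of $G$ lie outside $N_1\cup\dots\cup N_s$. Each of the remaining $t=n-s$ members $K$ of the cover has index at least $n$, and either $K\le N_i$ for some $i$ (so $K$ meets that complement in nothing) or $KN_i=G$ for every $i$, in which case $[K:K\cap N_{i_1}\cap\dots\cap N_{i_j}]=p_{i_1}\cdots p_{i_j}$ by coprimality, and inclusion-exclusion gives $|K\setminus(N_1\cup\dots\cup N_s)|=|K|\prod_i(1-1/p_i)\le(|G|/n)\prod_i(1-1/p_i)$. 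Hence $t\cdot(|G|/n)\prod_i(1-1/p_i)\ge|G|\prod_i(1-1/p_i)$, so $t\ge n$ and $n=s+t\ge 1+n$, a contradiction. This one-shot estimate is, in substance, how Cohn's corollary is obtained, and it replaces both your descent and your pigeonhole step.
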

On the other hand it turns out that the same value $\mu(G)+1$ represents an upper bound for $\gamma(G)$.
Indeed we prove:
\begin{pro}\label{basso}
If $G$ is a finite group, then $\gamma(G)\leq \mu(G)+1.$ Moreover $\gamma(G)=\mu(G)+1$ if and only
if $\mu(G)$ is a prime, $G$ contains at least two normal subgroups of index $\mu(G)$ and $\gamma(G)=\gamma(G/G^\prime).$
\end{pro}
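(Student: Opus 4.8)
Throughout I assume $G$ is noncyclic (the statement being empty otherwise), so that $m:=\mu(G)$ is a well-defined integer with $m\ge 2$, and I fix two distinct maximal subgroups $M_1\ne M_2$ of index $m$. Two elementary facts will be used repeatedly. First, for $N\trianglelefteq G$ one has $\gamma(G)\le\gamma(G/N)$, because the full preimages of the members of a normal cover of $G/N$ form a normal cover of $G$ with the same number of conjugacy classes. Second, if $M\le G$ is maximal of index $k$ and either $k$ is not a prime or $M$ is not normal, then $G/\Core_G(M)$ is noncyclic: it is a faithful transitive permutation group of degree $k$, and were it cyclic its point stabilizer (the unique subgroup of index $k$, which equals the intersection of all point stabilizers, which is trivial) would be trivial, forcing $G/\Core_G(M)\cong C_k$ with $M$ normal of prime index $k$.

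The upper bound $\gamma(G)\le m+1$ is proved by a case split that also isolates the equality case. If $m$ is composite, take any maximal $M$ of index $m$; by the second fact $G/\Core_G(M)$ is a noncyclic transitive permutation group of degree $m$, so the permutation-group bound stated in the Introduction gives $\gamma(G)\le\gamma(G/\Core_G(M))\le(m+2)/2\le m$. If $m=p$ is prime and some maximal subgroup $M$ of index $p$ is not normal, the second fact again makes $G/\Core_G(M)$ noncyclic and transitive of degree $p$, so $\gamma(G)\le(p+2)/2\le p$. In the remaining case $m=p$ is prime and $M_1,M_2$ are both normal of index $p$; then $M_1M_2=G$ (as $M_1\trianglelefteq G$ is maximal and $M_2\not\le M_1$), hence $G/(M_1\cap M_2)\cong G/M_1\times G/M_2\cong C_p\times C_p$, which is covered by its $p+1$ subgroups of index $p$; their preimages in $G$ form a normal cover by $p+1$ (normal) subgroups, so $\gamma(G)\le p+1$. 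Thus $\gamma(G)\le m+1$ always, and since the first two cases give $\gamma(G)\le m<m+1$, equality $\gamma(G)=m+1$ can occur only in the last case, i.e.\ only if $m$ is prime and $G$ has at least two normal subgroups of index $m$.

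It remains to show the converse link: whenever $m=p$ is prime and $G$ has two normal subgroups $N_1\ne N_2$ of index $p$, one has $\gamma(G/G')=p+1$. Since $G/N_i$ is cyclic, $G'\le N_1\cap N_2$, so $N_1/G'\ne N_2/G'$ are distinct maximal subgroups of index $p$ of the abelian group $G/G'$, whence $\mu(G/G')\le p$; and $\mu(G/G')<p$ is impossible, as two distinct maximal subgroups of $G/G'$ of index $<p$ would have full preimages in $G$ that are two distinct maximal subgroups of index $<p$, contradicting $\mu(G)=p$. Hence $\mu(G/G')=p$, so Cohn's Proposition gives $\sigma(G/G')\ge p+1$, the bound proved above (applied to $G/G'$) gives $\gamma(G/G')\le p+1$, and $\gamma(G/G')=\sigma(G/G')$ since $G/G'$ is abelian; therefore $\gamma(G/G')=p+1$. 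The proposition now follows: if $\gamma(G)=m+1$ we are in the last case, so $m$ is prime, $G$ has at least two normal subgroups of index $m$, and by the claim $\gamma(G/G')=m+1=\gamma(G)$; conversely, if $m$ is prime, $G$ has at least two normal subgroups of index $m$, and $\gamma(G)=\gamma(G/G')$, then the claim forces $\gamma(G)=\gamma(G/G')=m+1=\mu(G)+1$.

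I expect the main obstacle to be the bookkeeping in the case split — making sure that outside the single surviving case the estimate $\gamma(G)\le(m+2)/2<m+1$ genuinely holds, which is exactly where the noncyclicity criterion (the second fact) and the permutation-group bound are indispensable — together with the need to identify $\mu(G/G')$ precisely as $p$, not merely to bound it, using $\mu(G)=p$.
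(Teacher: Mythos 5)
Your proof is correct and follows essentially the same route as the paper's: split on whether some maximal subgroup of index $\mu(G)$ fails to be normal (reducing to the degree-$\mu(G)$ permutation bound of Theorem \ref{permut} applied to $G/M_G$) or whether two of them are normal (reducing to the quotient $C_p\times C_p$). You are in fact more explicit than the paper on the ``if'' direction of the equality criterion, where the paper leaves implicit the verification that $\gamma(G/G^\prime)=\mu(G)+1$; your derivation via $\mu(G/G^\prime)=p$, Cohn's lower bound for $\sigma$, and the identity $\sigma=\gamma$ for abelian groups fills that in correctly.
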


\begin{cor}Suppose that $G$ is a noncyclic finite group. If $\sigma(G)=\gamma(G),$ then $p=\sigma(G)-1$ is a prime
and $G$ has a minimal cover consisting of normal subgroups of index $p$. In particular
$\gamma(G)=\gamma(G/G^\prime)=\sigma(G/G^\prime).$
\end{cor}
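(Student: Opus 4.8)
The plan is to observe that under the hypothesis $\sigma(G)=\gamma(G)$ the lower bound for $\sigma$ and the upper bound for $\gamma$ must be simultaneously tight, so that everything is squeezed to $\mu(G)+1$, and then simply to read off the structural conclusions from the equality case of Proposition~\ref{basso}, adding a short explicit construction of the minimal cover.

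First I would combine the inequality $\sigma(G)\ge\mu(G)+1$ of Cohn with Proposition~\ref{basso} to get
\[
\mu(G)+1 \le \sigma(G) = \gamma(G) \le \mu(G)+1 ,
\]
so $\sigma(G)=\gamma(G)=\mu(G)+1$. In particular $\gamma(G)=\mu(G)+1$, so the ``moreover'' clause of Proposition~\ref{basso} applies: $p:=\mu(G)$ is a prime, $G$ has at least two normal subgroups of index $p$, and $\gamma(G)=\gamma(G/G^\prime)$. Since $p=\mu(G)=\gamma(G)-1=\sigma(G)-1$, the first assertion is established.

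Next I would produce an explicit minimal cover of $G$ by normal subgroups of index $p$. Choose distinct normal subgroups $N_1,N_2\trianglelefteq G$ of index $p$ and set $K=N_1\cap N_2\trianglelefteq G$. The quotient $G/K$ has order $p^2$ (because $N_1,N_2$ are maximal and distinct, $N_1N_2=G$, whence $[G:K]=p^2$) and it has two distinct subgroups of index $p$, hence $G/K\cong C_p\times C_p$; this group is the union of its $p+1$ subgroups of index $p$ (they pairwise intersect trivially and so account for $1+(p+1)(p-1)=p^2$ elements). Pulling these back gives normal subgroups $M_1,\dots,M_{p+1}$ of $G$, each of index $p$, with $\bigcup_i M_i=G$. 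Since $\sigma(G)=p+1$, this cover is minimal. The only point requiring care is that the constructed cover has \emph{exactly} $\sigma(G)$ members, which is what makes ``minimal'' genuine rather than automatic; this is where the reduction to $C_p\times C_p$ is used, and it is really the only non-formal step of the argument.

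Finally I would deduce the statement about $G/G^\prime$. Each $M_i$ has prime index, hence $G/M_i$ is cyclic and $G^\prime\le M_i$; thus $M_1/G^\prime,\dots,M_{p+1}/G^\prime$ are proper subgroups of $G/G^\prime$ of index $p$ covering it, so $\sigma(G/G^\prime)\le p+1$ (note $G/G^\prime$ is noncyclic, since $N_1/G^\prime\ne N_2/G^\prime$). On the other hand $\gamma(H)\le\sigma(H)$ for every noncyclic finite group $H$, since replacing each member of a minimal cover by the set of its $G$-conjugates gives a normal cover with at most $\sigma(H)$ conjugacy classes of subgroups. Hence
\[
p+1 = \gamma(G) = \gamma(G/G^\prime) \le \sigma(G/G^\prime) \le p+1 ,
\]
so $\gamma(G/G^\prime)=\sigma(G/G^\prime)=p+1=\sigma(G)$, completing the proof. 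No serious obstacle is expected: the whole weight of the corollary rests on Proposition~\ref{basso}, and what remains is the bookkeeping above.
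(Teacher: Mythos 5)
Your proof is correct and follows exactly the route the paper intends: the corollary is stated as an immediate consequence of Cohn's lower bound $\sigma(G)\ge\mu(G)+1$ and the equality case of Proposition~\ref{basso}, which is precisely the squeeze you perform. The explicit construction of the cover via $G/(N_1\cap N_2)\cong C_p\times C_p$ and the final inequality chain $p+1=\gamma(G)=\gamma(G/G^\prime)\le\sigma(G/G^\prime)\le p+1$ are the right way to fill in the details the paper leaves implicit.
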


Proposition \ref{basso} is a consequence of a more general result, bounding $\gamma(G)$ when $G$ is
a noncyclic permutation group.

\begin{thm}\label{permut}If  $G$ is a noncyclic  permutation group of degree $n,$
then $\gamma(G) \leq (n+2)/2.$
\end{thm}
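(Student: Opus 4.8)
We sketch the strategy of a proof.

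The plan is to argue by induction on $n$, the basic tool being that $\gamma(G)\le\gamma(G/K)$ whenever $K\trianglelefteq G$ and $G/K$ is noncyclic (the preimages in $G$ of the members of a normal cover of $G/K$ form a normal cover of $G$ by proper subgroups). Consequently, whenever $G$ admits a noncyclic quotient which is a transitive permutation group of degree $<n$ — which happens if the given action is intransitive with a suitable orbit, or imprimitive with a suitable block system — the bound for $G$ follows from that in smaller degree.

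First I would dispose of the intransitive case. Let $\Omega_1,\dots,\Omega_r$ ($r\ge 2$) be the orbits and $N_i$ the kernel of the action of $G$ on $\Omega_i$. If $G/N_i$ is noncyclic for some $i$, then $\gamma(G)\le\gamma(G/N_i)\le(|\Omega_i|+2)/2<(n+2)/2$ by induction. Otherwise every $G/N_i$ is cyclic and, since $\bigcap_i N_i=1$, $G$ embeds into a direct product of cyclic groups, so $G$ is abelian. For a noncyclic abelian $G$ one has $\gamma(G)=\sigma(G)=q+1$, where $q$ is the least prime such that $C_q\times C_q$ is a subgroup (equivalently a quotient) of $G$; since a faithful permutation representation of such a group has degree at least $2q$, we get $\gamma(G)\le q+1\le n/2+1=(n+2)/2$.

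Now assume $G$ is transitive. If $G$ is imprimitive and some non-trivial block system carries a noncyclic action $\og$ of $G$, then $\og$ has degree $m$ with $2\le m\le n/2$, and $\gamma(G)\le\gamma(\og)\le(m+2)/2<(n+2)/2$ by induction. This leaves two configurations: $G$ is primitive; or $G$ is transitive and every non-trivial block system carries a cyclic action — passing to a maximal block system then gives $\og\cong C_p$ for a prime $p$, so that $G$ has a normal subgroup $N\ne 1$ with $G/N\cong C_p$, and $N$ coincides with the setwise stabilizer in $G$ of every block. In both configurations I would open the cover with a point stabilizer $H=G_\alpha$: it is proper and its conjugates cover precisely the elements of $G$ having a fixed point, so it remains to cover the derangements by at most $\lfloor n/2\rfloor$ further conjugacy classes of proper subgroups. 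For a derangement $g$ every $\langle g\rangle$-orbit has size $\ge 2$; hence, if $g$ is not an $n$-cycle, then $g$ lies in the setwise stabilizer of a proper subset of size between $2$ and $\lfloor n/2\rfloor$ (take the support of a shortest cycle of $g$), while if $g$ is an $n$-cycle then $\langle g\rangle$ is itself a proper subgroup of $G$ (as $G$ is noncyclic). If $G$ contains $\alt(n)$ this is already conclusive: $G$ is transitive on $k$-subsets for $2\le k\le\lfloor n/2\rfloor$, so those setwise stabilizers fall into at most $\lfloor n/2\rfloor-1$ conjugacy classes — and only the values of $k$ actually arising as a cycle length of a derangement are needed, which for $\alt(n)$ is further limited by parity — while the $n$-cycles are absorbed by the conjugacy class(es) of regular cyclic subgroups; the count comes out to at most $\lfloor(n+2)/2\rfloor$, with equality essentially only for $\sym(n)$.

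The crux — and the main obstacle — is this last step for the groups that are neither symmetric nor alternating. A general transitive, in particular primitive, group is not transitive on $k$-subsets, so the setwise stabilizers of subsets of a given size, and likewise the cyclic subgroups generated by derangements, split into many conjugacy classes and the naive count exceeds $(n+2)/2$. Here one must bring in the structure theory of primitive groups: via the O'Nan--Scott description and the known orders and subgroup lattices of the resulting groups, one shows that only a small number of conjugacy classes of proper subgroups is needed to meet every derangement class, the finitely many small-degree primitive groups being verified directly. Finally the remaining imprimitive case $G/N\cong C_p$ must be handled on its own — for instance by taking $N$ itself as a single conjugacy class of the cover (it is a proper normal subgroup containing every element with a fixed point) and then covering the $p-1$ outer cosets, each consisting entirely of derangements, through the cyclic subgroups their elements generate, every one of which surjects onto $C_p$. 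Keeping the bookkeeping in all of these cases within $(n+2)/2$ conjugacy classes is where the real work of the theorem lies.
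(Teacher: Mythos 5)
Your reductions are sound and partly coincide with the paper's: induction on $n$ via $\gamma(G)\le\gamma(G/K)$, disposal of the intransitive case (the paper uses solubility of $G$ rather than abelianness, but your version works), and the imprimitive case with a noncyclic block action. However, there is a genuine gap exactly where you locate it: the primitive case and the case where every block action is cyclic are the entire content of the theorem, and your proposal for them — conjugates of a point stabilizer to absorb elements with fixed points, then setwise stabilizers of shortest-cycle supports and regular cyclic subgroups to absorb derangements — is not carried out and, as you yourself observe, the naive count of conjugacy classes of such stabilizers exceeds $(n+2)/2$ for primitive groups that are not $\sym(n)$ or $\alt(n)$. Saying that ``one must bring in the O'Nan--Scott description and the known subgroup lattices'' is a statement of the problem, not a proof; no mechanism is given that collapses the many orbits on $k$-subsets, or the many classes of cyclic regular subgroups, into at most $(n+2)/2$ classes. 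The same applies to your final case $G/N\cong C_p$: covering the $p-1$ outer cosets ``through the cyclic subgroups their elements generate'' could a priori require unboundedly many conjugacy classes.

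For comparison, the paper never looks at derangements or point stabilizers in the hard case. Writing $G\le H\wr K$ with $H$ the primitive group induced on a minimal block and $K$ cyclic, it splits according to $\soc(H)$: if $\soc(H)$ is abelian or is a product of two nonabelian minimal normal subgroups, it quotients $G$ by its intersection with a suitable normal subgroup of the wreath product to obtain a faithful noncyclic permutation group of strictly smaller degree and applies induction; if $\soc(H)\cong S^t$ is nonabelian and the analogous quotient is cyclic, it reduces to a group with a unique nonabelian minimal normal subgroup $S^u$ and cyclic top, for which Proposition \ref{ell} gives $\gamma < u\cdot m(S)/2$. That proposition rests in turn on Proposition \ref{as}, the CFSG-based bound $\gamma(X)<m(\soc(X))/2$ for almost simple $X\ne\aut(\alt(6))$, proved by comparing $\gamma(X)$ with conjugacy-class counts (via centralizer covers) or with the number of Aschbacher classes. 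Some such classification-dependent input is unavoidable, and it is precisely the ingredient your outline postpones; as written, the proposal does not constitute a proof.
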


We may complete the previous statement noticing that the upper bound is reached infinitely often: if $p$ is any prime and $G$ is a subgroup of $\sym(2p)$ generated by two disjoint $p$-cycles then $G \cong C_p \times C_p$ so $\gamma(G) = p+1 = (2p+2)/2$.

\

It is interesting to study the groups for which $\sigma$ or $\gamma$ takes
the smallest possible value. No finite group can be expressed as a union of two proper subgroups or as a union of conjugates of a proper subgroup;
so $\sigma(G)\geq 3$ and $\gamma(G)\geq 2.$ Scorza's Theorem says that $\sigma(G)=3$ if and only if $G$ is the union of three
subgroups of index 2; this is equivalent to say that if $\sigma(G)=3$ but $\sigma(G/N)>3$ for every nontrivial normal subgroup $N$ of $G,$
then $G\cong C_2\times C_2.$ One could expect that, in a similar way, there are only few groups $G$ such that $\gamma(G)=2$ but $\gamma(G/N)>2$ for every nontrivial normal subgroup $N$ of $G,$ however it is not precisely like that. Indeed we will give many different examples of groups
$G$ with $\gamma(G)=2.$ However some restrictions on the structure of these groups can be proved.

\begin{thm}Assume that $\gamma(G)=2$ but $\gamma(G/N)>2$ for every nontrivial normal subgroup $N$ of $G.$
Then $G$ has a unique minimal normal subgroup $N$. Moreover if $G$ is covered with the conjugates of two maximal subgroups
then either one of these two subgroups contains $\soc(G)$ or $G$ is an almost simple group.
\end{thm}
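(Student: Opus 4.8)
The plan is to work throughout with a normal cover by \emph{maximal} subgroups: replacing a member of a normal cover by a maximal overgroup keeps it a normal cover, so $\gamma(G)=2$ gives maximal $M_1,M_2$ with $G=C_1\cup C_2$, where $C_i:=\bigcup_{g\in G}M_i^g\neq G$ is closed under conjugation. The first basic remark: if a normal subgroup $N$ satisfies $N\leq M_1\cap M_2$ then $M_1/N,M_2/N$ are proper in $G/N$ and their conjugates cover $G/N$, so $\gamma(G/N)\leq 2$; hence, under our hypothesis, \emph{every minimal normal subgroup of $G$ avoids $M_1\cap M_2$}. The second remark is a reduction: if $N$ is minimal normal with $N\not\leq M_i$, then $M_iN=G$ and $M_i$ normalizes $M_i\cap N$, so $\bigcup_{g}(M_i\cap N)^g=\bigcup_{n\in N}(M_i\cap N)^n$; combined with $N\cap C_i=\bigcup_{g}(M_i\cap N)^g$ (as $N\trianglelefteq G$) this gives $N\cap C_i=\bigcup_{n\in N}(M_i\cap N)^n$.

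Next I would analyse two configurations. \textbf{(Loose case.)} If a minimal normal $N$ lies in neither $M_1$ nor $M_2$, then $N=\bigcup_{n\in N}(M_1\cap N)^n\cup\bigcup_{n\in N}(M_2\cap N)^n$, so $\gamma(N)\leq 2$. Since $\gamma(A)=\sigma(A)\geq 3$ for a noncyclic abelian group $A$ and $\gamma(C_p)=\infty$, $N$ cannot be abelian; and if $N=T^k=T_1\times\cdots\times T_k$ with $T$ nonabelian simple and $k\geq 2$, one uses that $M_i$ (which maps onto $G/N$ with kernel $M_i\cap N$) permutes the $k$ factors transitively. Hence for each $i$, either $M_i\cap T_1=1$ — this happens when $M_i\cap N$ projects onto $T_1$, since then $M_i\cap N$ is a subdirect product of $T^k$, so a product of full diagonals relative to an $M_i$-invariant partition whose blocks, being of equal size and of size $\geq 2$ by properness, force $M_i\cap N\cap T_1=1$, whence $\bigcup_n(M_i\cap N)^n$ meets $T_1$ only in $1$ — or else $\pi_1(M_i\cap N)<T_1$, in which case $\bigcup_n(M_i\cap N)^n$ is confined to $\prod_j U_j^{(i)}$ with $U_j^{(i)}=\bigcup_{t\in T_j}\pi_j(M_i\cap N)^t\subsetneq T_j$. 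In each of the four combinations one writes down an element of $N$ supported on at most two of the factors lying in neither $\bigcup_n(M_1\cap N)^n$ nor $\bigcup_n(M_2\cap N)^n$, a contradiction; so a loose minimal normal subgroup is simple. \textbf{(Separated case.)} There are no minimal normal $N_1\leq M_1$ (with $N_1\not\leq M_2$) and $N_2\leq M_2$ (with $N_2\not\leq M_1$): pick $n_2^*\in N_2\setminus\bigcup_{n}(M_1\cap N_2)^n$ (possible since $M_1\cap N_2<N_2$); for $n_1\in N_1\setminus\{1\}$, if $n_1n_2^*\in C_1$ then $n_1\in M_1^g$ forces $n_2^*\in(M_1\cap N_2)^g=(M_1\cap N_2)^{n}$ for some $n\in N_2$ (as $G=M_1N_2$), which is excluded; so $n_1n_2^*\in C_2$, whence $n_2^*\in M_2^g$ forces $n_1\in(M_2\cap N_1)^g=(M_2\cap N_1)^n$, $n\in N_1$. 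As this holds for all $n_1\neq 1$, $N_1$ is the union of the $N_1$-conjugates of the proper subgroup $M_2\cap N_1$ — impossible.

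To obtain the unique minimal normal subgroup, suppose $G$ had two distinct ones. By the first remark none lies in $M_1\cap M_2$, and by the separated case, after swapping $M_1,M_2$ no minimal normal lies in $M_2$ alone. If some minimal normal $N_2$ is loose it is simple; taking a second minimal normal $N_1$, the identity $N_1N_2=\bigcup_w(M_1\cap N_1N_2)^w\cup\bigcup_w(M_2\cap N_1N_2)^w$ on $N_1N_2=N_1\times N_2$ (using $G=M_i(N_1N_2)$), together with Goursat's lemma — which, after excluding diagonal linking by the $M_i$-invariance of $M_i\cap(N_1\times N_2)$ even when $N_1\cong N_2$, identifies $M_i\cap(N_1\times N_2)$ as a product $(M_i\cap N_1)\times(M_i\cap N_2)$ (for $i$ with $N_i\leq M_i$ one factor being all of $N_i$) — lets one choose $v_1\notin\bigcup_n(M_2\cap N_1)^n$ and $v_2\notin\bigcup_n(M_1\cap N_2)^n$ with $(v_1,v_2)$ in neither box, a contradiction. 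Hence no minimal normal is loose, so every minimal normal lies in $M_1$ alone, i.e.\ $\soc(G)\leq M_1$ and $\soc(G)\not\leq M_2$. Excluding \emph{this} last possibility when $G$ has at least two minimal normal subgroups is, I expect, the crux: the cover does not descend to $G/\soc(G)$ (the image of $M_2$ is all of $G/\soc(G)$), so one must argue directly — from $x\in M_2\setminus C_1$ one gets $x\,\soc(G)\subseteq C_2$, while no conjugate of $M_2$ contains a whole $\soc(G)$-coset — to force a configuration already excluded; this is the step I would expect to cost the most.

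Finally the ``moreover'' part is short: with $N$ the unique minimal normal subgroup we have $\soc(G)=N$, and for a cover by conjugates of maximal $M_1,M_2$ the first remark gives $N\not\leq M_1\cap M_2$. If $N\leq M_1$ or $N\leq M_2$ we are in the first alternative; otherwise $N$ is loose, hence simple by the loose case. Then $C_G(N)\trianglelefteq G$ meets $N$ in $Z(N)=1$, so if $C_G(N)\neq 1$ it would contain a second minimal normal subgroup, contradicting uniqueness; thus $C_G(N)=1$, $G$ embeds in $\aut(N)$, and since $G$ contains $\inn(N)\cong N$ simple, $G$ is almost simple.
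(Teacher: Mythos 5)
Most of your proposal tracks the paper's actual proof closely: your ``second remark'' is the paper's Lemma \ref{uno}, your separated case is exactly the paper's argument with $z=xy$ in its case (b), your loose-case analysis (diagonal versus product type intersections with $N=T^k$) is the paper's second lemma, and your derivation of almost simplicity from $C_G(N)=1$ is standard. But there is a genuine gap, and it is exactly where you flag it: the case in which $G$ has at least two minimal normal subgroups, all of them contained in $M_1$ and none in $M_2$. You reduce to this configuration and then only sketch a hope (``one must argue directly \dots to force a configuration already excluded''), without producing the argument. Nothing earlier in your proof applies here: the cover does not descend to $G/\soc(G)$, the loose/separated dichotomies are vacuous, and no elementary counting of cosets of $\soc(G)$ by conjugates of $M_2$ gives a contradiction by itself.

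The paper closes this case with a specific and nontrivial idea that your proposal is missing. Since every minimal normal subgroup avoids $M_2$, one gets $(M_2)_G=1$, so $G$ is primitive with point stabilizer $M_2$ and two minimal normal subgroups; by the structure theory of such primitive groups one may take $G=\{(l_1,l_2)\in L^2: Nl_1=Nl_2\}$ with $\soc(G)=N^2$, $M_2=\{(l,l)\}$ the diagonal and $M_1=T^2\cap G$ for some $N\le T<L$. Choosing $x\notin\bigcup_{l}T^{l}$, the coset $(x,x)N^2$ meets no conjugate of $M_1$, hence lies in $\bigcup_{g}M_2^{g}=\bigcup_{r\in R}M_2^{r}$ with $R=\{(1,n)\}$; comparing $\{(x,xn):n\in N\}$ with $\{(l,l^{n})\}$ forces $Nx=\{x^{n}:n\in N\}$, i.e.\ $C_N(x)=1$, so the nonabelian group $N$ admits a fixed-point-free automorphism and is therefore soluble by the cited result of Rowley --- a contradiction. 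This appeal to an external classification-dependent theorem is the crux of the uniqueness statement, and without it (or a substitute) your proof of the first assertion of the theorem is incomplete. As a smaller point, your Goursat step for a loose $N_2$ alongside a second minimal normal $N_1$ is only sketched and would need care when $N_1\cong N_2$, but that subcase can also be absorbed into the primitive-group analysis above, which is how the paper avoids it.
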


On the other hand, as  we will recall in Section \ref{gamma2},  there are several different examples
of almost simple groups $G$ with $\gamma(G)=2$. Moreover in the same section we will construct infinite families
of examples of groups $G$ with a unique minimal normal subgroup $N$, covered by the conjugates of two maximal subgroups
$H$ and $K$, in which $H$ contains $N$ but the intersection of $K$ with $N$ has different behaviors:
trivial (when $N$ is abelian), of diagonal type, of product type. The conclusion is that there are several
different ways in which a finite group can be covered by the conjugates of two proper subgroups and a complete classification is quite difficult.

\subsection*{Acknoledgements} We would like to thank Attila A. Mar{\'o}ti and Pablo Spiga for fruitful discussions and  valuable and helpful  comments.

\section{Groups $G$ with $\gamma(G)=\sigma(G)$}\label{sdue}

We start this section with some preliminary results.

\begin{lem}\label{solva}
Let $G$ be a finite soluble noncyclic group such that $G/G'$ is cyclic. Then $\gamma(G) = 2$.
\end{lem}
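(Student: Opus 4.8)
The plan is to exploit the fact that a finite soluble group with cyclic abelianization is generated by a single element "modulo the commutator," so it has exactly one maximal subgroup of any given prime index, and more importantly it has very few maximal subgroups overall in a controlled configuration. Concretely, since $G/G'$ is cyclic, for each prime $p$ dividing $|G/G'|$ there is a \emph{unique} subgroup of index $p$ containing $G'$, hence a unique maximal subgroup of $G$ of index $p$ that contains $G'$; because $G$ is soluble, every maximal subgroup has prime-power index, and a maximal subgroup of $p$-power index that does \emph{not} contain $G'$ arises from a chief factor that is a $p$-group complemented in $G$. I would first reduce to the case where $G$ is not nilpotent (if $G$ is nilpotent with cyclic $G/G'$ then $G$ is cyclic, contradicting noncyclicity, or one handles the $p$-group case directly), and then locate a single chief factor doing the work.

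The key step is to produce two conjugacy classes of proper subgroups whose union of conjugates is all of $G$. Let $p$ be a prime divisor of $|G/G'|$ and let $M$ be the unique maximal subgroup of index $p$ containing $G'$; so $M \trianglelefteq G$ and $G/M \cong C_p$. The elements of $G$ lying outside $M$ together form the nontrivial cosets of $M$; I need a second subgroup $K$ whose conjugates sweep up $G \setminus M$, i.e. whose conjugates meet every coset $Mg$ with $g \notin M$. For this I would use a complement: by solubility and the structure of $G$, one finds a minimal normal subgroup $N$ (a $q$-group) together with a subgroup $K$ with $G = NK$ and $N \cap K = 1$ when such a complement exists, or more robustly use the Frattini-type argument that $G = M \cup \bigcup_g K^g$ when $K$ is chosen to be a suitable maximal subgroup not containing $G'$ — and then verify the covering by a counting argument: $|M| + \sum_{g}|K^g \setminus M| \ge |G|$. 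The cleanest route is probably: take $x \in G$ with $G = \langle x, G'\rangle$; then $\langle x \rangle$ supplements $G'$, and a Frattini/Schur–Zassenhaus argument lets us replace $\langle x\rangle$ by a complement $K$ to a minimal normal subgroup $N$, with $K$ still supplementing $G'$ and $G = G' K$; then $M = G'$-related index-$p$ subgroup and $K$ give the two classes, using that the conjugates of $K$ cover everything outside $M$ because $G/\core_G(K)$ is a primitive soluble group of the shape $N \rtimes K$.

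The main obstacle I anticipate is verifying that the conjugates of the second subgroup $K$, together with the single normal subgroup $M$, genuinely cover $G$ — this is exactly the content of the formula $\sigma(G) = |W|+1$ from Tomkinson cited in the introduction (with $W$ the smallest chief factor with multiple complements), and one wants the normal-cover analogue to collapse to $2$ here. I would therefore structure the proof around Tomkinson-style bookkeeping: first establish that $G$ has a chief factor $W = N$ which is complemented and on which $G$ acts irreducibly, so that $G/\core_G(K)$ is primitive of affine type; then observe that in an affine primitive group $V \rtimes H$ with $H$ a point stabilizer, every element lies in some conjugate of $H$ unless it projects nontrivially to $V \rtimes H / V$, i.e. the conjugates of $H$ cover everything except a union of cosets of $V$; pulling back, the conjugates of $K$ cover $G \setminus Mg_0$-type sets, and one checks the leftover is contained in $M$ precisely because $G/M$ is the quotient $(V\rtimes H)/V \cong C_p$ coming from $G/G'$. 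Finally I would assemble $\gamma(G) \le 2$ from the two classes $\{M^g\} = \{M\}$ and $\{K^g\}$, and note $\gamma(G) \ge 2$ always, giving equality; a short separate check disposes of the degenerate cases where $G'$ itself is trivial or $G$ is a $p$-group.
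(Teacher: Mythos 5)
There is a genuine gap: the covering fact you rely on --- that in a soluble primitive affine group $V\rtimes H$ the conjugates of the point stabiliser, together with a normal subgroup of prime index, sweep up all of $G$ --- is false at the level of generality at which you invoke it, and your counting inequality $|M|+\sum_g|K^g\setminus M|\ge |G|$ does not repair it because it counts elements with multiplicity rather than proving disjointness. Take $G=S_4$: it is soluble, noncyclic, $G/G'\cong C_2$ is cyclic, and $G$ is primitive affine with $V=V_4$ and point stabiliser $K=S_3$. Your proposed cover consists of $M=A_4$ (the unique subgroup of index $2$ containing $G'$) and the four conjugates of the complement $K=S_3$; but a $4$-cycle lies in no conjugate of $S_3$ and not in $A_4$, so this family does not cover $G$ (the multiplicity count $12+4\cdot 3=24=|G|$ is misleading, since each transposition lies in two conjugates of $S_3$). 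Of course $\gamma(S_4)=2$, but via $A_4$ and the Sylow $2$-subgroups, i.e.\ via a proper quotient, not via a complement to a minimal normal subgroup of $G$ itself.

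What is missing is the reduction the paper performs first: since $\gamma(G)\le\gamma(G/N)$ for every normal subgroup $N$, one inducts on $|G|$ and assumes every proper quotient of $G$ is cyclic. Combined with $G/G'$ cyclic, this forces $G=N\rtimes M$ with $N=G'$ the unique minimal normal subgroup and $M$ a \emph{cyclic} complement. Cyclicity of $M$ is the whole point: for $M^g\ne M$ one gets $M\cap M^g\unlhd\langle M,M^g\rangle=G$, hence $M\cap M^g=1$, so the $|N|$ conjugates of $M$ are pairwise disjoint away from the identity and the exact count $|N|+|N|(|G:N|-1)=|G|$ shows that $N$ together with these conjugates covers $G$. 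Without first securing a cyclic complement, the Frobenius-style disjointness that makes two classes suffice is not available: your Schur--Zassenhaus/Frattini complement need not be cyclic, and passing to the primitive quotient $G/\core_G(K)$ only guarantees a cyclic abelianisation, not a cyclic point stabiliser (again $S_4$). Your variant with the large normal subgroup of index $p$ in place of $N$ would also work once the reduction is in place, but the reduction is where the real content of the proof lies.
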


\begin{proof}  We make induction on the order of $G.$
Since $\gamma(G) \leq \gamma(G/N)$ for every normal subgroup $N$ of $G$, we may assume that every proper quotient of $G$ is cyclic.
Together with the fact that $G/G'$ is cyclic, this implies that $G$ contains a unique minimal normal subgroup, say $N$, and $N$ has
a cyclic complement $M$. Moreover $M$ has precisely $|N|$ conjugates in $G$.  Let $K$ be a conjugate of $M$ in $G$, with $K\neq M.$ Since $M$ is cyclic, so is $K$ and $K \cap M \unlhd \langle K,M \rangle = G$. Since $K \cap M \not \supseteq N$ it follows $K \cap M = 1$. The $|N|$ conjugates of $M$ together with $N$ cover in total $$|N| + (|G:N|-1)|N| = |G|$$elements of $G$. It follows that $\gamma(G) = 2$.
\end{proof}




Denote by $m(G)$ the smallest index of a proper subgroup of $G.$ The following consequence
of the classification of the finite simple groups plays a crucial role in our proof.

\begin{pro} \label{as}
Let $X$ be an almost simple group. If $X\neq \aut (\alt(6))$ then $\gamma(X) < m(\soc(X))/2$.
Moreover $\gamma(\aut (\alt(6))=3.$
\end{pro}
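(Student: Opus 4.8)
The plan is to reduce the statement to a finite computation over the list of families of finite simple groups supplied by the classification, controlling two quantities simultaneously: an upper bound for $\gamma(X)$ and a lower bound for $m(\soc(X))$, the minimal index of a proper subgroup of the socle. For the upper bound on $\gamma(X)$, the natural tool is Theorem \ref{permut}: if $X$ acts faithfully and transitively on a set of size $d$, then $\gamma(X)\leq (d+2)/2$ unless $X$ is cyclic (which an almost simple group never is). Thus if we let $d$ be the minimal faithful transitive permutation degree of $X$, we get $\gamma(X)\leq (d+2)/2$, and it suffices to prove $(d+2)/2 < m(S)/2$, i.e.\ $d+2<m(S)$, where $S=\soc(X)$, for all almost simple $X$ with $X\neq\aut(\alt(6))$. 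Since $d$ is at most a small multiple of $m(S)$ in the bad cases only (for most families $d=m(S)$ exactly, as $S$ itself acts primitively on cosets of a point stabiliser and $X$ normalises this action), the inequality $d+2<m(S)$ will fail only for a short explicit list of groups, which must then be handled by hand.

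First I would dispose of the generic case: for the alternating groups $S=\alt(n)$ with $n\geq 5$ (excluding the small exceptions), $m(S)=n$ and one can realise $X$ (which is $\alt(n)$ or $\sym(n)$) as a permutation group of degree $n$, so $\gamma(X)\leq (n+2)/2 < n/2 = m(S)/2$ fails by exactly $1$ — so here I instead use a sharper bound on $\gamma(\sym(n))$ and $\gamma(\alt(n))$, for instance covering by (conjugates of) intransitive and imprimitive maximal subgroups, to get $\gamma(X)$ comfortably below $n/2$; this is where the hypothesis $X\neq\aut(\alt(6))$ enters, together with the separate verification $\gamma(\aut(\alt(6)))=3$ which can be checked directly since $\aut(\alt(6))$ has order $1440$. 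For the sporadic groups there are finitely many, and for each the value of $m(S)$ is known and small covering numbers $\gamma(X)\leq 2$ or $3$ are available from explicit maximal subgroups, so $\gamma(X)<m(S)/2$ is a finite check. The bulk of the work is the groups of Lie type: here $m(S)$ is given by well-known tables (e.g.\ the smallest-degree permutation representations, typically the action on an orbit of parabolic subgroups), and one uses the action of $X$ on the same coset space — of degree $d$ at most a bounded multiple of $m(S)$ coming from the outer part — to bound $\gamma(X)\leq(d+2)/2$.

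The key step in the order I would carry it out: (1) reduce via Theorem \ref{permut} to the numerical inequality $d_{\min}(X)+2<m(\soc(X))$, where $d_{\min}$ is the minimal faithful transitive degree; (2) for each family of simple groups, quote the value of $m(S)$ and of $d_{\min}(X)$ from the classification-based literature, verifying the inequality for all but finitely many members; (3) resolve the finitely many exceptional configurations — the small-rank, small-field groups of Lie type, the small sporadics, the small alternating groups, and crucially $S=\alt(6)$ — by hand, exhibiting explicit normal covers with few conjugacy classes of subgroups (usually two or three maximal subgroups suffice), and in the one genuine exception $\aut(\alt(6))$ computing $\gamma=3$ directly and noting $m(\alt(6))/2=3$ so the strict inequality indeed fails only there. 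The main obstacle I anticipate is step (3) for the low-rank groups of Lie type over small fields (such as $\psl_2(q)$ for small $q$, $\psl_3(q)$, $\psp_4(q)$, the Suzuki and Ree groups): there $d_{\min}$ and $m(S)$ are close, so the crude bound from Theorem \ref{permut} is not enough and one needs a tailored argument — covering $X$ by conjugates of a Borel subgroup together with one or two further maximal subgroups, and checking that these do cover every element (every semisimple element lies in a maximal torus hence in some conjugate of a Borel or of a torus normaliser, and unipotent elements lie in the Borel) — to push $\gamma(X)$ down to $2$ or $3$, safely below $m(S)/2$. Handling $\psl_2(q)$ carefully, where $m(S)=q+1$ (or smaller for tiny $q$) and $\gamma$ turns out to be at most $3$ using Borel plus a dihedral maximal subgroup of order $q\pm1$ times a bounded factor, is the linchpin of this last step.
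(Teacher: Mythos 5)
Your central reduction does not work, for a numerical reason that kills it for \emph{every} almost simple group, not just for a short list of exceptions. If $X$ acts faithfully on $d$ points, then its socle $S$ acts faithfully, hence nontrivially, on those points, so some point stabiliser in $S$ is a proper subgroup of index at most $d$; that is, $d\geq m(S)$ for every faithful permutation representation of $X$. Consequently the bound $\gamma(X)\leq (d+2)/2$ from Theorem \ref{permut} always yields at least $(m(S)+2)/2>m(S)/2$, and your target inequality $d+2<m(S)$ is never satisfiable. You notice this slippage in the alternating case (``fails by exactly $1$'') but then propose the same mechanism as the main engine for the groups of Lie type, where it fails just as surely. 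A second, independent problem is logical order: in the paper Theorem \ref{permut} is proved \emph{after}, and by means of, Proposition \ref{as} (through Proposition \ref{ell}), so invoking Theorem \ref{permut} here is circular unless the whole induction is restructured.

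The mechanism actually used in the paper is different. For $S$ of Lie type, $X$ is covered by the centralizers of the nontrivial elements of $S$, so $\gamma(X)\leq k^*(S)$, the number of conjugacy classes of prime-order elements of $S$; one then compares the Fulman--Guralnick upper bounds for the class number $k(S)$ (roughly $cq^r$ with $r$ the rank) with the known values of $m(S)$, which for most families grow like a strictly higher power of $q$. For $\psl(n,q)$, where $k(S)$ and $m(S)$ have the same order of magnitude and this comparison fails, the paper instead covers $X$ by the maximal subgroups lying in the Aschbacher classes $\mathcal C_1,\dots,\mathcal C_8$, whose number of conjugacy classes is of order $n+\log q$, far below $m(S)/2=(q^n-1)/(2(q-1))$; the sporadic and alternating cases are quoted from Holmes--Mar\'oti and Bubboloni--Praeger, with finitely many leftovers settled via the ATLAS or GAP. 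Your closing remarks about covering $X$ by a Borel subgroup together with torus normalisers are in the right spirit (they are a hands-on version of the centralizer covering), but as written they are offered only as a patch for finitely many small cases; they would have to become the main argument for all groups of Lie type, with the count of conjugacy classes of maximal tori made explicit and compared with $m(S)/2$, before the proof could close.
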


\begin{proof}
Let $S = \text{soc}(X)$. For the value of $m(S)$ we refer to \cite[Table 5.2.A]{kl} and \cite[Table 1]{lucdam}.

If $S$ is an alternating group of degree $n \geq 5$ then $\gamma(X) < n/2= m(S)/2$ \cite{bp} unless $n=6.$
Moreover it is easy to check using \cite{gap} that if $S=\alt(6)$ then $\gamma(X)\leq 3$ with equality only if $X=\aut (\alt(6)).$


Suppose that $S$ is a sporadic simple group. It can be deduced from  \cite[Table 1]{marspo} that $\gamma(M_{11})=2$, $\gamma(M_{12})\leq 3,$
$\gamma(S)\leq 9$ if $S$ is not the Monster group $\M$ and $\gamma(\M) \leq 14:$ this is sufficient to conclude $\gamma(S)<m(S)/2.$ If $X$ is not simple then $X/S \cong C_2$ and $X$ has at most six conjugacy classes of involutions, and precisely 3 conjugacy classes of involutions if $X=\aut(M_{12}).$ Since every element of $X$ of odd order lies in $S$ and every element of $X$ of even order centralizes an involution, $\gamma(X) \leq 6+1 = 7  < m(S)/2$ if $S\neq M_{12},$   $\gamma(\aut(M_{12}))\leq 1+3=4.$

Suppose that $S$ is a simple group of Lie type. Denote by $q = p^f$ the size of the base field $F$, where $p$ is the characteristic.
Since $X$ is the union of the centralizers of the nontrivial elements of $S$ \cite[Proposition 7]{cubo}, $\gamma(X)\leq k^*(S)$, the number of conjugacy classes of elements in $S$ of prime order. In the case $S \neq A_m(q)$ we will prove that $k^*(S) < k(S) \leq m(S)/2,$
by using the bounds for the number $k(S)$ of conjugacy classes in $S$ proved in \cite[Corollary 1.2 and Tables 1 and 2]{chev}. Suppose that $S$ is of classical type and let $n$ be the dimension of the natural module over $F$. In \cite{ash}, eight collections $\mathcal{C}_1, \ldots, \mathcal{C}_8$ of natural subgroups of $X$ are defined, and each cyclic subgroup of $X$ is contained in one of these subgroups. So
$X$ is covered by the maximal subgroups of $X$ belonging to these Aschbacher classes. In the particular case when $S=A_m(q)$, we have $n=m+1,$
$S\cong PSL(n+1,q)$ and the number of conjugacy classes of subgroups of type $\mathcal{C}_1, \ldots, \mathcal{C}_8$ is at most
$2\cdot n+3\cdot d(n)+\log n + \log f +5 \leq 5(n+1)+\log n + \log q$
where $\log = \log_2$, $d(n)$ is the number of divisors of $n$ and $\omega(f)$ is the number of prime divisors of $f$ \cite[p. 69]{kalu}.
In the case $S=A_m(q)$ we will prove that
\begin{equation} \label{c1c8}
5(n+1)+\log n +\log q < m(S)/2
\end{equation}
with finitely many exceptions. We are now ready to start our case by case analysis.


\begin{itemize}

\item $S = A_m(q)$, $n = m+1$, $m \geq 1$, $(n,q) \neq (2,2), (2,3)$.
We have $m(S) = \frac{q^n-1}{q-1}$ if $(n,q) \neq (2,5), (2,7), (2,9), (2,11), (4,2)$, $m(A_1(5)) = 5$, $m(A_1(7)) = 7$, $m(A_1(9)) = 6$, $m(A_1(11)) = 11$, $m(A_3(2)) = 8$. By \cite{bl} $\gamma(PSL(2,q))=\gamma(PGL(2,q))=2$, so me way assume that if $n=2$ then $q$ is not a prime.
Moreover $PSL(2,4) \cong \alt(5)$, $PSL(3,2)\cong PSL(2,7),$ $PSL(2,9)\cong \alt(6)$ and $PSL(4,2)\cong \alt(8)$. In the remaining cases
inequality $(\ref{c1c8})$ holds except for $(n,q)\in \{(6,2), (5,2), (4,3), (3,3), (3,4), (3,5), (2,8), (2,16), (2,25), (2,27)\}.$  On the other hand $k^*(PSL(5,2))=13,$ $k^*(PSL(4,3))=11,$
$k(PSL(3,4))=10$, $k^*(PSL(3,5))=14,$ $k^*(PSL(2,8))=4,$
$k^*(PSL(2,25))=10$ and $k^*(PSL(2,27))=12$ (see \cite{atlas}).
Suppose $S\in\{PSL(2,16),$ $PSL(3,3)\}$: by \cite{bl} $\gamma(X)=2$ if $X=S,$ otherwise $X/S$ is a non-trivial 2-group, so every elements
in $X\setminus S$ centralizes an involution and since $X$ contains 2 conjugacy classes of  involution we deduce that $\gamma(X)\leq 3$.
Finally $\gamma(PSL(6,2))\leq [2\cdot 6 + 3\cdot d(6) + 5 + \log 6]=31 < 63/2 = m(PSL(6,2))/2.$

\item $S = B_m(q)$, $q$ odd, $m > 1$. We have $m(S) = \frac{q^{2m}-1}{q-1}$ if $q > 3$, $m(S) = \frac{1}{2} 3^m(3^m-1)$ if $q=3$ and $m>2,$ $m(B_2(3))=27.$
Moreover $k(S)\leq  7.3 \cdot q^m$ and $k(B_2(q))\leq q^2+12q$ if $q$ is odd. This is enough to deduce that $k(S)<m(S)/2,$  except in the three cases $B_2(3)$, $B_2(5), B_3(3).$ However, it follows from \cite{atlas}, that $k^*(B_2(3))=7<m(B_2(3))/2=27/2,$ $k(B_2(5))=34<m(B_2(3))/2=78$
and $k(B_3(3))=58<m(B_3(3))/2=351/2.$

\item $S = C_m(q)$, $m > 2$. We have $m(S) = \frac{q^{2m}-1}{q-1}$ if $q > 2$, $m(S) = 2^{m-1}(2^m-1)$ if $q=2$.
Moreover $k(S)\leq  15.2 \cdot q^m$. It is easy to see that $15.2 \cdot q^m < m(S)/2,$ except  for $(m,q)\in \{(3,2), (3,3), (3,4), (3,5), (4,2), (4,3), (5,2)\}.$
On the other hand by \cite[Table 3]{chev} $k(C_3(4))\leq 4^3+5\cdot 4^2< m(C_3(4))/2=1365/2$,
$k(C_3(5))\leq 5^3+12\cdot 5^2< m(C_3(5))/2=1953$
 and it follows from \cite{atlas} that $k^*(C_3(2))=9<m(C_3(2))/2=14,$
$k(C_3(3))=74<m(C_3(3))/2=182,$ $k^*(C_4(2))=15<m(C_4(2))/2=60,$ $k(C_4(3))=278<m(C_4(3))/2=1640$ and $k(C_5(2))=198<m(C_5(2))/2=248.$

\item $S = D_m(q)$, $m > 3$. We have that $m(S) = \frac{(q^m-1)(q^{m-1}+1)}{q-1}$ if $q > 2$ and $m(S) = 2^{m-1} (2^m-1)$ if $q=2$.
Moreover $k(S)\leq  6.8 \cdot q^m$ and it is easy to see that $6.8 \cdot q^m < m(S)/2$ except  for $(m,q)\in \{(4,2),(4,3)\}.$
On the other hand it follows from \cite{atlas} that $k(D_4(2))=53< m(D_4(2))/2=60$ and $k(D_4(3))=114 < m(D_4(3))/2=520.$

\item $S = \prescript{2}{}\!{A}_m(q)$, $m > 1$. We have $m(S) = \frac{(q^{m+1}-(-1)^{m+1}) (q^{m}-(-1)^{m})}{q^2-1}$ if $m \geq 4$ and $m+1$ is not divisible by $6$ when $q=2$, $m(S) = 2^{m}(2^{m+1}-1)/3$ if $q=2$ and $m$ is divisible by $6$, $m(\prescript{2}\!{}{A}_3(q)) = (q+1)(q^3+1)$, $m(\prescript{2}\!{}{A}_2(q)) = q^3+1$ if $q \neq 2, 5$, $m(\prescript{2}\!{}{A}_2(5)) = 50$. Moreover $k(S)\leq 8.26 \cdot q^m$
    and $k(\prescript{2}{}\!{A}_m(q))\leq q^{n-1}+7q^{n-2}$ if $q>2.$  This is enough to deduce that $k(S)<m(S)/2,$  except
    \emph{}when $m=2$ and $q\leq 7$, $m=3$ and $q\leq 5$ or $(m,q)=(4,2).$ However
    $k({\prescript{2}{}\!{A}}_2(3))=14\leq m(\prescript{2}{}\!{A}_2(3))/2=14,$
    $k({\prescript{2}{}\!{A}}_2(4))=22<m(\prescript{2}{}\!{A}_2(4))/2=65/2,$
    $k({\prescript{2}{}\!{A}}_2(5))=14<m(\prescript{2}{}\!{A}_2(5))/2=25,$
    $k({\prescript{2}{}\!{A}}_2(7))=58<m(\prescript{2}{}\!{A}_2(7))=172,$
    $k^*({\prescript{2}{}\!{A}}_3(2))=7<m(\prescript{2}{}\!{A}_3(2))/2=27/2,$
    $k({\prescript{2}{}\!{A}}_3(3))=20<m(\prescript{2}{}\!{A}_3(3))/2=66,$
    $k({\prescript{2}{}\!{A}}_3(4))=94<m(\prescript{2}{}\!{A}_3(4))/2=325/2,$
    $k({\prescript{2}{}\!{A}}_3(5))=97<m(\prescript{2}{}\!{A}_3(5))/2=378$ and
    $k({\prescript{2}{}\!{A}}_4(2))=47<m(\prescript{2}{}\!{A}_4(2))/2=165/2.$

\item $S = \prescript{2}{}{D}_m(q)$, $m > 3$. We have $m(S) = \frac{(q^m+1)(q^{m-1}-1)}{q-1}.$
Moreover $k(S)\leq 6.8 \cdot q^m< m(S)$ except when $(m,q)\in\{(4,2),(4,3),(5,2)\}$. Moreover it follows from \cite{atlas} that
$k(\prescript{2}{}{D}_4(2))=39 < m(\prescript{2}{}{D}_4(2))/2=119/2,$   $k(\prescript{2}{}{D}_4(3))=114 < m(\prescript{2}{}{D}_4(3))/2=533$
and $k(\prescript{2}{}{D}_5(2))=115 < m(\prescript{2}{}{D}_5(2))/2=495/2.$
\end{itemize}

Now suppose that $S$ is a Lie group of exceptional type. The bound $k(S) \leq 15.2 q^r$ (where $r$ is the rank) in \cite[Corollary 1.2]{chev} compared with \cite[Table 1]{lucdam} implies that $\gamma(X) < m(S)/2$ if $S$ is one of the groups $F_4(q)$, $^2F_4(q)$, $E_6(q)$, $^2E_6(q)$, $\prescript{3}{}{D}_4(q)$, $E_7(q)$, $E_8(q)$. Suppose this is not the case. We will use \cite[Table 1]{chev} and \cite[Table 1]{lucdam}.

\begin{itemize}

\item $S = G_2(q)$.    We have $k(S)\leq q^2+2q+9$ and $q^2+2q+9\leq 3q^2 < q^5/2 \leq m(S)/2$.

\item $S = \prescript{2}{}{G}_2(q)$, $p = 3$, $f = 2m+1$, $m \geq 1$.
We have $k(S)\leq q+8$ and $q+8 \leq (q^3+1)/2=m(S)/2$, since $q\geq 27.$

\item $S = \prescript{2}{}{B}_2(q)$, $p = 2$, $f = 2m+1$, $m \geq 1$. We have $k(S)\leq q+3$ and $q+3 < m(S)/2=(q^2+1)/2$,  since $q \geq 8$.

\item $S = \prescript{2}{}{F}_4(2)'$. In this case $k(S) = 22 < 2^5 \cdot 5^2 = m(S)/2$.
\end{itemize}
This concludes our proof.
\end{proof}



\begin{pro} \label{ell}
Let $G$ be a group with a unique minimal normal subgroup $N$ and assume that $N$ is nonabelian and $G/N$ is cyclic.
Let $N\cong S^t$ with $S$ a nonabelian simple group.
Then $\gamma(G) < t\cdot m(S)/2.$ 
\end{pro}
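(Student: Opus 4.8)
The plan is to exploit the action of $G$ by conjugation on the set $\Omega=\{S_1,\dots,S_t\}$ of simple direct factors of $N$. Since $N$ is the unique minimal normal subgroup of $G$ and $G/N$ is cyclic, $G$ permutes $\Omega$ transitively with cyclic image, so the image of $G$ in $\sym(t)$ is a cyclic group acting regularly, of order $t$; write $K\trianglelefteq G$ for the kernel of this action, so $N\le K$, $G/K\cong C_t$, and $K/N$ is cyclic. I would first record that $X:=K/C_K(S_1)$ is almost simple with socle isomorphic to $S$, and that $X/\soc(X)$ is a quotient of the cyclic group $K/N$ and hence cyclic; in particular $X\ne\aut(\alt(6))$, since $\out(\alt(6))\cong C_2\times C_2$. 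Thus Proposition~\ref{as} applies to $X$ and yields $r:=\gamma(X)<m(S)/2$; fix a normal cover $\{M_1,\dots,M_r\}$ of $X$. (If $t=1$ then $K=G=X$ and we are done.)

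Now I would assemble a normal cover of $G$ from three blocks. \textbf{(i) Covering $K$:} the projection $\pi\colon K\twoheadrightarrow X$ is surjective, so each $\pi^{-1}(M_i)$ is proper in $K$, hence in $G$, and since $\{M_i\}$ covers $X$ up to conjugacy the $K$-conjugates of the $\pi^{-1}(M_i)$ already cover $K$; this costs $r$ conjugacy classes. \textbf{(ii) Covering the elements $x\notin K$ whose image $\bar x$ in $G/K\cong C_t$ has order $d$ with $1<d<t$:} every such $\bar x$ lies in the index-$p$ subgroup of $C_t$ for some prime $p\mid t$, so $x$ lies in the corresponding proper normal subgroup $G_p\trianglelefteq G$; this costs at most $\omega(t)$ classes of normal subgroups, and none when $t$ is prime. \textbf{(iii) Covering $A_t:=\{x\in G:\bar x \text{ generates }G/K\}$}, the elements acting on $\Omega$ as a single $t$-cycle — this is the core of the proof.

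For block (iii) I would argue as in a wreath product. Using $C_G(N)=1$ one embeds $G\le W:=\aut(S)\wr C_t$ with $N=\soc(W)$ and $K=G\cap\aut(S)^{\,t}$; for $M\le\aut(S)$ one has the ``wreath-type'' subgroup $M\wr C_t=M^{\,t}\rtimes C_t\le W$. The point is that for $x\in A_t$ the element $x^t$ lies in $K$, and its ``cycle product'' $\Pi_x$ — the product of the automorphism coordinates of $x$ taken around the cycle — lies in $X$; hence some $X$-conjugate of $\Pi_x$ lies in some $M_i$. A normal-form argument (redistributing the coordinates of $x$ so as to concentrate $\Pi_x$ into a single coordinate, by a conjugating element that can be taken inside $K$) then shows that $x$ is $G$-conjugate into $(M_i\wr C_t)\cap G$; this subgroup is proper in $G$ because, were it to contain $K$, the inclusion $K\le\aut(S)^{\,t}$ would force $K\le M_i^{\,t}$ and hence $X=\pi_1(K)\le M_i$, contradicting $M_i\subsetneq X$. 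So $A_t$ is covered by at most $r$ further conjugacy classes.

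Combining the three blocks gives $\gamma(G)\le 2r+\omega(t)$, where the $\omega(t)$ is replaced by $0$ when $t$ is prime; since $r\le(m(S)-1)/2$ and $m(S)\ge 5$, an elementary estimate shows $2r+\omega(t)<t\,m(S)/2$ for every $t\ge 2$, finishing the proof (the bound is in fact extremely wasteful once $t\ge 2$). The main obstacle is block (iii): the wreath-type subgroups are delicate because $K$ is in general only a subdirect product in $\aut(S)^{\,t}$, not the full direct product, and because $g^t$ — for a coset representative $g$ of a generator of $G/K$ — lies in $K$ rather than in $N$ and need not normalise the chosen $M_i$. Handling these requires a careful choice of the representative $g$ and of the cover $\{M_i\}$; the needed freedom comes precisely from the inclusion $N\le K$ (which allows the cycle product to be altered by an arbitrary element of $\soc(X)$) and from $X/\soc(X)$ being cyclic. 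This is where essentially all the work of the proof is concentrated.
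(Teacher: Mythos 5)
Your overall strategy (pass to the almost simple group $X$ induced on a simple factor, apply Proposition~\ref{as}, and cover the elements permuting the factors in a $t$-cycle via ``wreath-type'' subgroups built from a cover of $X$) is the same as the paper's, but your execution has a genuine gap precisely at the point you yourself defer: block (iii) is not an implementation detail, it is the proof, and as sketched it does not go through. The obstruction is this: to conjugate $x=(x_1,\dots,x_t)\delta$ into $(M_i\wr C_t)\cap G$ you must solve a system of the form $a_j^{-1}x_j a_{\delta(j)}\in M_i$ with the $a_j$ ranging over what conjugation in $G$ actually allows, which beyond a single fixed coset shift is only $N=S^t$ (conjugating by $K$ moves each coordinate only within a fixed coset of $S$, since $K$ is a subdirect product). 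Solving that system coordinate by coordinate requires $x_j\in M_iS$ at each step, i.e.\ essentially $M_iS=X$, \emph{and} that the cycle product $\Pi_x$ lie in $M_i$ up to $S$-conjugacy. A normal cover $\{M_1,\dots,M_r\}$ of $X$ fixed in advance gives you neither: the member containing a conjugate of $\Pi_x$ need not satisfy $M_iS=X$. The paper resolves this by choosing $M$ \emph{depending on $x$}: when $\langle x,N\rangle=G$ the cycle product generates $X$ modulo $S$, so any proper subgroup $M$ of $X$ containing $\Pi_x$ automatically satisfies $MS=X$, and the redistribution works. Your partition by the image in $G/K$ rather than by whether $\langle x,N\rangle=G$ puts into block (iii) elements for which this fails: if $|X/S|$ has a prime divisor $q\nmid t$, there are $x$ generating $G/K$ with $\langle x,N\rangle\neq G$, whose cycle product generates only a proper subgroup of $X$ over $S$, and for which no $M_i$ with the two required properties need exist.

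This cannot be patched for free. The natural fix is to cover the elements with $\langle x,N\rangle\neq G$ by the $\omega(t\,|X/S|)$ maximal normal subgroups of $G$ containing $N$ (as the paper does), at a cost of up to $\omega(|X/S|)<m(S)/4$ extra classes. But your budget is already exhausted by block (i): spending $r=\gamma(X)\approx m(S)/2$ classes to cover $K$ is wasteful, since for $t\ge 2$ the subgroup $K$ is itself a proper normal subgroup of $G$ and is swallowed by a \emph{single} class (indeed by one of the normal subgroups of block (ii)). With block (i) as you have it, the corrected total $2r+\omega(t)+\omega(|X/S|)$ can reach roughly $\tfrac54 m(S)$, which exceeds $t\,m(S)/2=m(S)$ at $t=2$. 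Once you replace block (i) by the observation that $K$ lies in a proper normal subgroup and restrict the wreath-type construction to elements with $\langle x,N\rangle=G$, your argument becomes the paper's, with the count $\gamma(G)\le\omega(t\,|X/S|)+\gamma(X)$ and the estimate $\omega(|X/S|)<m(S)/4$ from \cite[Lemma 2.7]{asch-gur} doing the final work.
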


\begin{proof}
By assumption, $N= S_1 \times \ldots \times S_t$, with $S_i\cong S$ for
$i=1, \ldots , t$. Let  $\psi$ be the map from  $N_G(S_1) $ to  $\aut(S)$ induced by
the conjugacy  action on $S_1.$ Set $X=\psi(N_G(S_1))$ and note that $X$ is an almost simple group with socle
$S=\inn(S)=\psi(S_1)$.
Then $G$ embeds in the wreath product $X \wr \perm(t)$ \cite[Remarks 1.1.40.13]{classes}.
Since $G/N$ is cyclic, $X/S$ is also cyclic; more precisely if $h=(y_1,\dots,y_t)\rho \in G$ generates $G$ modulo $N$, then
$\rho$ is a $t$-cycle and $y_1 y_{\rho(1)} \cdots y_{\rho^{(t-1)}(1)}$ generates $X$ modulo $S.$ 

Now let $g=(x_1,\dots,x_t)\delta\in G$. If $\langle g, N\rangle \neq G,$ then $g$ is contained in one of the
 $\omega(|G/N|) = \omega(t \cdot |X/S|)$ normal subgroups of prime index containing $N.$
Assume now that $\langle g, N\rangle = G$  
and let $y:=x_1 x_{\delta(1)} \cdots x_{\delta^{(t-1)}(1)}.$
Since $\langle y, S \rangle =X$, there exists a proper subgroup
$M$ of $X$ with $y\in M$ and $MS=X.$ Choose $a_2,\dots,a_t \in S$ such that
$$x_1 a_{\delta(1)}^{-1},\ a_{\delta(1)} x_{\delta(1)} a_{\delta^2(1)}^{-1}, \ldots, a_{\delta^{t-2}(1)} x_{\delta^{t-2}(1)} a_{\delta^{t-1}(1)}^{-1},\  a_{\delta^{t-1}(1)} x_{\delta^{t-1}(1)} \in M.$$
It can be easily checked that $g$ normalizes $M \times M^{a_2} \times \ldots \times M^{a_t}.$
In other words, if $\mathcal M$ is a normal cover of $X$, then a normal cover of $G$ can be obtained
taking the maximal normal subgroups of $G$ containing $N$
and the conjugates of the normalizers $N_G(M\times\dots\times M)$ with  $M$ running in $\mathcal M.$
It follows that $$\gamma(G) \leq \omega(t \cdot |X/S|) + \gamma(X).$$
If $t=1$ then $G=X$ and the result follows from Proposition \ref{as}.
If $t \geq 2$ then,
since $\omega(|X/S|) < m(S)/4$ \cite[Lemma 2.7]{asch-gur},
$\gamma(X) \leq m(S)/2$ and $4\omega(t)\leq 4(2t-3)<m(S)(2t-3)$, we conclude
$$\gamma(G) \leq \omega(t) + \frac{m(S)}4+\frac{m(S)}{2} < \frac{t\cdot m(S)}{2}$$
as in our claim.
\end{proof}

\begin{proof}[Proof of Theorem \ref{permut}] The proof is by induction on the degree $n.$ If $G/G^\prime$ is not cyclic then $C_p\times C_p$ is an epimorphic image of $G$ for some prime $p$.
Since $G \leq \sym(n)$, $p^2$ divides $n!$ so $p \leq n/2$, and we deduce that $\gamma(G)\leq \gamma(C_p\times C_p)=p+1 \leq (n+2)/2$. So from now on we will assume that $G/G^\prime$ is cyclic. If $G$ is soluble,
then $\gamma(G)=2,$ by Lemma \ref{solva}. So we may assume that $G$ is not soluble.
First suppose that $G$ is not transitive;
let $\Omega_1,\dots,\Omega_t$ be the orbits of $G$ on $\{1,\dots,n\}$ and $G_1,\dots,G_t$ the corresponding transitive constituents.
Since $G$ is not soluble and it is a subdirect product of $G_1\times \dots \times G_t$, there exists $i$ such that $G_i$ is noncyclic:
by induction $\gamma(G)\leq \gamma(G_i) \leq (|\Omega_i|+2)/2 \leq (n+2)/2.$ So we may assume that $G$ is transitive.
Suppose
that $\{ B_1,\dots,B_s\}$ is a system of blocks for $G$ with $|B_i|=r$.
Consider $\mathop{\rm {St}}_G(B_1)$, the stabilizer
in $G$ of the block $B_1$. Denote by $\alpha:
\mathop{\rm{St}}_G(B_1) \to \perm (r)$ the permutation
representation induced by the action of $\mathop{\rm{St}}_G(B_1)
$ on the set $B_1$ and by
$\beta: G \to \perm (s)$ the permutation representation induced by
the action of $G$ on the set of blocks and let
$H =\alpha (\mathop{\rm{St}}_G(B_1))$, $K=\beta (G).$
We may identify
$G$, as a permutation group, with a subgroup of $H \wr K$ (in its
imprimitive representation)
in such a way that, for  $1 \leq i \leq s$, $\mathop{\rm{St}}_G(B_j)$
acts on $B_j$ as the subgroup $H$ of $\perm (r)$
and $G$ acts on the set $\{B_1,\dots,B_s\}$ as the subgroup $K$ of
$\perm (s)$. We choose $B_1=\{1,\dots,n\}$ if $G$ is primitive, $B_1$ to be an
imprimitive block of minimal size otherwise. If $K$ is noncyclic, then, by induction, $\gamma(G)\leq \gamma(K) \leq (s+2)/2 \leq
(n+2)/2,$ so
we may assume that $K$ is cyclic. 
We distinguish three different possibilities:

\noindent 1) $H$ has a unique minimal normal subgroup $N$ and $N\cong C_p^t$ is an elementary abelian $p$-group. In this case
$r=p^t$ and $H/N\leq GL(t,p)\leq \perm(r-1).$
Consider the normal subgroup $M\cong N^{s}$ of $H\wr \perm(s).$ Notice that $G/(M\cap G)\leq \GL(t,p)\wr \perm(s)$
has a faithful permutational representation of degree $(r-1)s$. Since $G$ is not soluble,
 $G/(M\cap G)$ is not cyclic and therefore by induction $\gamma(G)\leq \gamma(G/(M\cap G)) \leq
 ((r-1)s+2)/2\} \leq (n+2)/2.$

\noindent 2) $H$ has a unique minimal normal subgroup $N$ and $N\cong S^t$ is the direct product of $t$ isomorphic non abelian simple groups.
In particular $N$ is transitive of degree $r$ so
$r\geq m(S)^t$ (see \cite[Proposition 5.2.7]{kl} and the comment afterwards) and $G\leq H\wr \perm(s)\leq (\aut(S) \wr \perm(t))\wr \perm(s)\leq \aut(S)\wr \perm(t\cdot s).$
Consider the normal subgroup $M\cong S^{t\cdot s}$ of $\aut(S)\wr \perm(t\cdot s).$ Notice that $G/(M\cap G)\leq \out(S)\wr \perm(t\cdot s)$
has a faithful permutational representation of degree $|\out(S)|\cdot t \cdot s\leq (2\cdot m(S) \cdot t \cdot s)/3 < m(S)^ts\leq r\cdot s \leq n$
(indeed $|\out(S)|\leq 2m(S)/3$ by \cite[Lemma 2.7]{asch-gur}).
If $G/(M\cap G)$ is not cyclic, then by induction $\gamma(G)\leq \gamma(G/(M\cap G)) \leq (n+2)/2.$ Assume that $G/(M\cap G)$ is cyclic and let
$T$ be a minimal normal subgroup of $G$ contained in $M\cap G.$ We have $T\cong S^u$ with $u\leq t\cdot s$; moreover $G/C_G(T)$ has a unique
minimal normal subgroup $T^*/C_G(T)\cong T$ and $G/T^*$ is cyclic: by Proposition \ref{ell}
$\gamma(G)\leq \gamma(G/C_G(T))< u\cdot m(S)/2 \leq
t \cdot s \cdot m(S)\leq n/2.$

\noindent 3) $\soc H=N=N_1\times N_2$ where $N_1$ and $N_2$ are isomorphic non abelian minimal normal subgroups of $H$. In this case $r=|N_1|=|N_2|.$
Let $H^*:=H/C_H(N_1)\leq \aut(N_1)\leq \perm(r-1).$ We have $G\leq H\wr \perm(s)$ and the wreath product $H\wr \perm(s)$ contains a normal subgroup
$M\cong C_H(N_1)^s.$ Notice that $G/(M\cap G)\leq H^* \wr \perm(s)$ is a noncyclic permutation group of degree $(r-1)s < n.$ So
$\gamma(G)\leq \gamma(G/(M\cap G)) \leq (n+2)/2.$
\end{proof}




\begin{proof}[Proof of Proposition \ref{basso}]
Let $m=\mu(G).$  First assume that $G$ contains a maximal subgroup $M$ of index $m$, which is not normal in $G$. In this case
$G/M_G$ is a non cyclic permutation group of degree $m$ and $\gamma(G)\leq \gamma(G/M_G)\leq m$ by Theorem
\ref{permut}.
Otherwise 
 $G$ contains two normal maximal subgroup of index $m.$ In this case $m$ is a prime
and $C_m\times C_m$ is an epimorphic image of $G$. In particular $\gamma(G)\leq \gamma(C_m \times C_m)=m+1.$

Therefore we have proved that $\gamma(G)\leq m+1$ and $\gamma(G)=m+1$ only if $m$ is a prime, $C_m\times C_m$ is an epimorphic image
of $G$ and $\gamma(G)=\gamma(C_m\times C_m)=\gamma(G/G^\prime).$
\end{proof}

\section{Groups $G$ with $\gamma(G)=2$}\label{gamma2}
Before to stat our discussion, let us introduce a couple of easy observations.

\begin{lem}\label{uno}Let $H$ be a proper subgroup of a finite group $G$ and let $N\unlhd G$ be such that $HN=G.$
We have $\bigcup_{g \in G}(H\cap N)^g \neq N.$
\end{lem}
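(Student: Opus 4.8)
The plan is to exploit the standard fact that a finite group is never the union of the conjugates of a single proper subgroup, applied not to $G$ itself but to the factor group $G/N$ acting on the coset space, or equivalently to $N$ as a group on which $H$ acts. The statement $\bigcup_{g\in G}(H\cap N)^g\neq N$ says precisely that the subgroup $H\cap N$ of $N$, conjugated by all of $G$, fails to cover $N$. Since $HN=G$, every $g\in G$ can be written as $g=hm$ with $h\in H$, $m\in N$, and $(H\cap N)^{hm}=((H\cap N)^h)^m=(H\cap N)^{hm}$; more usefully, because $H$ normalizes neither side in general, I should track the $G$-conjugates of $H\cap N$ as $N$-conjugates of the $H$-conjugates. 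Concretely, $\{(H\cap N)^g : g\in G\}=\{(H\cap N)^{m} : m\in N\}$ is false in general, but $\{(H\cap N)^g\}=\{((H\cap N)^h)^m : h\in H, m\in N\}$, and since each $(H\cap N)^h$ is again a subgroup of $N$ of the same order, it suffices to bound the union.

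The cleanest route: let $C=\bigcap_{g\in G}(H\cap N)^g=\Core_G(H\cap N)$, a normal subgroup of $G$ contained in $N$. Passing to $G/C$ and $N/C$, we may assume $\Core_G(H\cap N)=1$. If $H\cap N=1$ then the union is just $\{1\}\neq N$ (note $N\neq 1$, since otherwise $G=H$ contradicting properness of $H$), so assume $H\cap N\neq 1$. Now $H$ acts by conjugation on $N$, and the set of $G$-conjugates of $H\cap N$ inside $N$ breaks into finitely many $H$-orbits, one for each $N$-conjugacy; the key inequality is the Jordan-type counting bound: a proper subgroup $L$ of a finite group $N$ has $|\bigcup_{n\in N}L^n|\leq |N:N_N(L)|\cdot(|L|-1)+1\leq |N|-|N:N_N(L)|+1<|N|$. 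The subtlety is that here we have conjugates of $H\cap N$ under all of $G$, not just under $N$, so the number of distinct $G$-conjugates inside $N$ could exceed $|N:N_N(H\cap N)|$. I would handle this by noting that the $G$-conjugates of $H\cap N$ lying in $N$ are permuted transitively by $G$ (they all arise as $(H\cap N)^g$), hence they all have the same normalizer order in $N$, say $N_N((H\cap N)^g)$ has index $d$ in $N$ for each; but each such conjugate contributes at most $|H\cap N|-1$ nontrivial elements, and these overlap considerably — in fact every nontrivial element covered lies in some $(H\cap N)^g\cap N=(H\cap N)^g$.

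The honest obstacle is the possibility that the $G$-conjugates of $H\cap N$ cover $N$ even though no single $N$-conjugacy class does; one must use $HN=G$ to rule this out. Here is the argument I expect to work: the set $\Sigma=\{(H\cap N)^g:g\in G\}$ is acted on by $G$; since $N$ acts on $\Sigma$ and $H$ acts on $\Sigma$ and $G=HN$, the $N$-orbits on $\Sigma$ are permuted transitively by $H$, so they all have equal size $|N:N_N(H\cap N)|=:d$, and if there are $r$ of them then $|\Sigma|=rd$ while also $|\Sigma|=|G:N_G(H\cap N)|$. Counting nontrivial elements, $|\bigcup_{L\in\Sigma}L|\leq |\Sigma|(|H\cap N|-1)+1$. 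This is not immediately less than $|N|$, so instead I would argue more carefully: fix one $N$-orbit, say the conjugates of $L_0:=H\cap N$; its union $U_0=\bigcup_{n\in N}L_0^n$ satisfies $|U_0|\leq |N|-d+1<|N|$ by the Jordan bound. Now $H$ permutes the $N$-orbits and correspondingly permutes the sets $U_0,\dots,U_{r-1}$; I claim $\bigcup_i U_i\neq N$ because... — and this is exactly the point where one invokes that the claim is really about $G=HN$ acting, and one can pass to the primitive/irreducible situation or apply the result inductively. Given the paper's context (this is Lemma \ref{uno}, a warm-up), I expect the intended proof is short: take $C=\Core_G(H)$; then in $G/C$, $H/C$ is core-free, and $H\cap N$ maps to a subgroup whose $G$-conjugates generate a normal subgroup properly contained in $N$ (since those conjugates all lie in the core-free... ). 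I will write it as: if the union equaled $N$, then $N\subseteq\bigcup_g H^g$, but then $G=HN\subseteq\bigcup_g H^g\cdot N$ would force, after quotienting by $\Core_G(H)$ and using that $\langle H^g:g\in G\rangle$ can be taken $\neq G$, a contradiction with the elementary fact that $G\neq\bigcup_g H^g$ — applied inside the subgroup $\langle N, H\rangle$... but that is all of $G$. So the genuinely necessary ingredient, and the step I flag as the crux, is the following sharper statement, which I will prove directly by the coset-counting argument: \emph{if $H<G$ and $HN=G$ with $N\trianglelefteq G$, then $H\cap N$ is a proper subgroup of $N$ whose full $G$-conjugacy union misses at least $|N:N_N(H\cap N)|-1\geq 1$ elements of $N$}, the proof being that $N\cap\bigcup_{g}H^g=\bigcup_g(H^g\cap N)=\bigcup_g(H\cap N)^g$ (using normality of $N$), and this is a union of $N$-conjugates only after we observe that modulo $N$ all $H^g$ are conjugate to $H$ by elements of $N$ since $G=HN$ — indeed for $g=hn$ we get $H^g=H^{hn}=(H^h)^n$, and $H^h$ ranges over the $H$-conjugates of $H$, all of which meet $N$ in subgroups that are themselves $N$-conjugate to each other as $h$ varies only if $H$ normalizes $N\cap H$... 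I will instead simply cite that $\bigcup_g(H\cap N)^g\subseteq\bigcup_{g}(H^g\cap N)$ and each term lies in a proper subgroup of $N$, then finish with the Jordan bound applied in $N$ to the subgroup $N_N$-conjugacy-closure, yielding strict inequality.
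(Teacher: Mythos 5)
Your proposal does not close the argument, and the gap is exactly the one you repeatedly circle around and flag as ``the crux'': you never establish that the $G$-conjugates of $H\cap N$ form a \emph{single} $N$-conjugacy class, so the Jordan bound (a finite group is not the union of the conjugates of one proper subgroup) never becomes applicable. Worse, at two points you assert the opposite of the fact that saves the day: you write that ``$H$ normalizes neither side in general'' and that $\{(H\cap N)^g : g\in G\}=\{(H\cap N)^m : m\in N\}$ ``is false in general.'' But $H$ \emph{always} normalizes $H\cap N$ here: since $N\unlhd G$, for $h\in H$ one has $(H\cap N)^h=H^h\cap N^h=H\cap N$, i.e.\ $H\cap N\unlhd H$. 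Hence $H\leq N_G(H\cap N)$, and the hypothesis $HN=G$ gives $G=N_G(H\cap N)\,N$, so writing $g=un$ with $u\in N_G(H\cap N)$ and $n\in N$ yields $(H\cap N)^g=(H\cap N)^n$. Therefore
$$\bigcup_{g\in G}(H\cap N)^g=\bigcup_{n\in N}(H\cap N)^n,$$
a union of $N$-conjugates of the single subgroup $H\cap N$, which is proper in $N$ (if $H\cap N=N$ then $N\leq H$ and $G=HN=H$, contradicting $H<G$). The standard counting argument in $N$ now gives strict inequality. This three-line argument is the paper's proof.

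Everything else in your write-up --- passing to $\Core_G(H\cap N)$, decomposing $\Sigma$ into $N$-orbits permuted by $H$, the attempted bound $|\Sigma|(|H\cap N|-1)+1$ --- is machinery built to compensate for not noticing $H\cap N\unlhd H$, and none of it actually rules out the scenario you correctly identify as the obstacle (several $N$-classes of conjugates jointly covering $N$). Your closing sentence, ``each term lies in a proper subgroup of $N$, then finish with the Jordan bound,'' is not a proof: a union of many proper subgroups can of course equal the whole group, and the Jordan bound only controls the conjugates of one subgroup under $N$ itself. So the proposal as written is incomplete; supplying the single observation $H\cap N\unlhd H$ repairs it and collapses it to the paper's argument.
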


\begin{proof}Let $X=H\cap N.$ Since $X\unlhd H,$ we have $G=HN=N_G(X)N.$ Hence $\cup_{g\in G} X^g=\cup_{n \in N}X^n \neq N.$
\end{proof}

\begin{lem}\label{due}Let $H$ be a proper subgroup of a finite group $G$ and let $N_1, N_2$ be two different minimal normal subgroups
of $G.$ If $HN_1=HN_2=G,$ then $H\cap N_1=H\cap N_2=1.$
\end{lem}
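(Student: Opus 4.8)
The plan is to exploit the fact that two distinct minimal normal subgroups commute elementwise, which forces $H\cap N_1$ to be normalized by all of $G$.

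First I would record that $N_1\cap N_2$ is a normal subgroup of $G$ contained in $N_1$, so by minimality of $N_1$ it equals $1$ or $N_1$; the latter would give $N_1\le N_2$ and then $N_1=N_2$ by minimality of $N_2$, contrary to hypothesis. Hence $N_1\cap N_2=1$, and since both subgroups are normal, $[N_1,N_2]\le N_1\cap N_2=1$. In particular $N_2$ centralizes $N_1$, and therefore $N_2$ normalizes every subgroup of $N_1$, in particular $H\cap N_1$.

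Next I would observe that $H\cap N_1$ is normalized by $H$ (because $N_1\unlhd G$), and by $N_2$ (by the previous step). Since $HN_2=G$, it follows that $H\cap N_1\unlhd G$. As $H\cap N_1\le N_1$ and $N_1$ is minimal normal, either $H\cap N_1=1$ or $H\cap N_1=N_1$; but the second case yields $N_1\le H$, whence $G=HN_1=H$, contradicting the assumption that $H$ is proper. Therefore $H\cap N_1=1$, and the same argument with the roles of $N_1$ and $N_2$ interchanged (using $HN_1=G$) gives $H\cap N_2=1$.

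There is no real obstacle here; the only point requiring a little care is the case analysis in the final step, where one must use the properness of $H$ to rule out $H\cap N_i=N_i$, and the symmetry of the hypotheses $HN_1=HN_2=G$ to get both conclusions at once.
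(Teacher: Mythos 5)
Your proof is correct and follows essentially the same route as the paper: the paper also observes that $H\cap N_1$ is normalized by $H$ and centralized by $N_2$ (hence normalized by $HN_2=G$) and then invokes minimality of $N_1$ together with $N_1\not\leq H$. You have merely spelled out the intermediate steps ($N_1\cap N_2=1$, so $[N_1,N_2]=1$) that the paper leaves implicit.
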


\begin{proof}Assume $HN_1=HN_2=G.$ Then $H\cap N_1$ is normalized by $H$ and centralized by $N_2$ hence $H\cap N_1$ is normalized
by $HN_2=G$. Since $N_1$ is a minimal normal subgroup of $G$ and $N_1\not\leq H$, we must have $H\cap N_1=1.$
\end{proof}

For the remaining part of this section, $G$ will be a finite group with the following properties:
\begin{enumerate}
\item $\gamma(G)=2;$
\item $\gamma(G/N)>2$ if $N$ is a non trivial normal subgroup of $G.$
\end{enumerate}
In particular there exists two maximal subgroups $H$ and $K$ with $$G=(\bigcup_{x \in G}H^x)\bigcup (\bigcup_{y \in G}K^y).$$
Moreover $(H\cap K)_G=1$, otherwise we would have $\gamma(G/(H\cap K)_G)=2.$
Let $$M=\soc (G)=N_1\times \dots\times N_t$$ be the socle of $G$ with $N_i$ a minimal normal subgroup of $G$ for $1\leq i \leq t.$

\begin{lem}$t=1$ i.e. $G$ contains a unique minimal normal subgroup.
\end{lem}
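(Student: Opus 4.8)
The plan is to derive a contradiction from assuming $t \geq 2$, by exploiting the hypothesis that $\gamma(G/N) > 2$ for every nontrivial normal $N$ together with the constraint $(H \cap K)_G = 1$. First I would record what maximality of $H$ and $K$ forces on their products with the $N_i$: for each $i$, either $N_i \leq H$ or $H N_i = G$ (since $HN_i$ is a subgroup properly containing the maximal $H$), and likewise for $K$. If some $N_i$ were contained in both $H$ and $K$, then $N_i \leq (H\cap K)_G = 1$, contradiction; so for each $i$ at least one of $H, K$ supplements $N_i$. Say, after relabelling, $H N_1 = G$.

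Next I would use Lemma \ref{uno} and Lemma \ref{due} as the main engine. Consider the quotient $\overline{G} = G/N_1$, which has $\gamma(\overline{G}) > 2$ by hypothesis (2), so the conjugates of the images of $H$ and $K$ cannot cover $\overline{G}$: there is an element $gN_1 \in \overline{G}$ lying in no conjugate of $\overline{H}$ or $\overline{K}$. Equivalently, the coset $gN_1$ meets no conjugate of $HN_1/N_1$ type subgroup — more precisely, since $\gamma(G) = 2$, every element of $G$, in particular every element of $gN_1$, lies in a conjugate of $H$ or of $K$, yet no element of $gN_1$ can lie in a conjugate of $H$ whose image contains $\overline{H}$'s class unless... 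Let me instead argue directly: because $\{H^x\} \cup \{K^y\}$ covers $G$ but the induced family does not cover $G/N_1$, the only way to reconcile this is that the ``missing'' elements of $G/N_1$ are covered in $G$ only by conjugates $H^x$ with $H^x N_1 = G$ (and similarly for $K$) — i.e. the coset $gN_1$ is covered using a supplement, which by Lemma \ref{uno} applied inside $N_1$-cosets cannot happen for the full coset. The clean formulation: if $HN_1 = G$ then for any $x$, $H^x N_1 = G$, and $\bigcup_x (H^x \cap gN_1)$ fails to be all of $gN_1$ by the same coset-counting as in Lemma \ref{uno} (the number of $H$-conjugates meeting a given coset, times the size of the intersection, falls short). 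So the coset $gN_1$ must be covered entirely by conjugates of $K$ — which forces, via maximality and Lemma \ref{due}, strong structural constraints.

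Concretely, I expect the argument to run: since $H N_1 = G$, the conjugates of $H$ cover strictly less than all of each nontrivial $N_1$-coset (by a Lemma \ref{uno}-style count, noting $N_G(H \cap N_1) \supseteq H$), so infinitely — at least one — full coset $gN_1$ must be entirely covered by $K$-conjugates, whence $K N_1 = G$ as well (otherwise $\bigcup_y K^y \subseteq \bigcup_y K^y$ avoids a coset of $N_1$ entirely when $N_1 \leq K$, contradicting that $K$-conjugates cover $gN_1 \not\subseteq K^y$). Now $HN_1 = KN_1 = G$; by Lemma \ref{due} with a second minimal normal subgroup $N_2$ we get leverage: either $N_2 \leq H$ or $HN_2 = G$, and symmetrically for $K$. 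Running the same coset-covering dichotomy with $N_2$ and combining, I would show that $N_1$ and $N_2$ each get supplemented by both $H$ and $K$, then apply Lemma \ref{due} to conclude $H \cap N_1 = H \cap N_2 = 1$ and $K \cap N_1 = K \cap N_2 = 1$; but then $H$ and $K$ both have order dividing $|G|/|N_1N_2|$ times something too small, and a final counting argument — comparing $|G|$ to the number of elements covered, which is at most $|G:H| \cdot |H| + |G:K| \cdot |K|$ discounted heavily by the overlaps in each $N_i$-coset — yields $\gamma(G) \geq 3$, the contradiction.

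The main obstacle I anticipate is making the ``a whole coset $gN_1$ must be covered by $K$-conjugates alone'' step fully rigorous: one must be careful that conjugates $H^x$ with $N_1 \leq H^x$ do cover whole cosets, so the claim is really that the *specific* missing cosets (those not meeting $\bigcup \{H^x : N_1 \leq H^x\}$) are the problem, and one has to track the lift of the counterexample $gN_1$ from $G/N_1$ correctly. I would handle this by working in $G/N_1$ from the start: the images $\overline H, \overline K$ are subgroups of $\overline G$ (proper iff $N_1 \not\leq$ the subgroup), their conjugates do not cover $\overline G$, pick $\bar g$ outside; then lift and observe that every element of the coset $\bar g$'s preimage lies in some $H^x$ or $K^y$, but those $H^x$ with $N_1 \leq H^x$ have image $\overline H \not\ni \bar g$, so they contribute nothing — reducing to conjugates that supplement $N_1$, where Lemma \ref{uno}'s counting applies within the coset. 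Symmetry in the two minimal normal subgroups and a clean application of Lemma \ref{due} should then close it.
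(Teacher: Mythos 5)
Your opening reductions (for each $i$ either $N_i\leq H$ or $HN_i=G$, and no $N_i$ lies in both $H$ and $K$ since $(H\cap K)_G=1$) are correct and match the paper's setup. The proof breaks down at its central step, however. You claim that when $HN_1=G$ the conjugates of $H$ cover strictly less than each nontrivial coset $gN_1$ ``by the same coset-counting as in Lemma \ref{uno}.'' Lemma \ref{uno} only says that the conjugates of $H\cap N$ fail to cover $N$ \emph{itself} (a group is never the union of conjugates of a proper subgroup); for a nontrivial coset $gN$ the analogous statement is simply false. The paper's Lemma \ref{prev} is devoted precisely to characterizing when a full coset $gM$ equals $\bigcup_{m\in M}(gR)^m$, and the examples at the end of Section \ref{gamma2} (e.g.\ $S\wr C_p$ with $H$ the diagonal subgroup, where $G=M\cup\bigcup_m H^m$) show this happens in abundance. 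Relatedly, your appeal to $\gamma(G/N_1)>2$ is vacuous in the relevant situation: if $HN_1=G$ then $\overline H=\overline G$ is not a proper subgroup of $G/N_1$, so hypothesis (2) says nothing about the images of $H$ and $K$, and there need be no ``missing'' element $\bar g$. So the pivotal conclusion ``a whole coset $gN_1$ must be covered by $K$-conjugates alone, hence $KN_1=G$'' is not established.

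Even granting that step, your endgame misses the genuinely hard case, which is the one the paper treats first: one of $H,K$ contains all of $M=\soc(G)$, say $M\leq H$, so that $K_G=1$ and $G$ is primitive with point stabilizer $K$. If $t=2$ the O'Nan--Scott analysis forces $G$ to be of diagonal type with $M=N^2$, $N$ nonabelian, and $K$ a full diagonal with $K\cap N_i=1$. There your proposed counting is \emph{exactly tight}: each conjugate $K^x$ meets a coset $gN_1$ in at most one point and there are exactly $|N|=|gN_1|$ such conjugates, so no inequality of the form ``elements covered $<|G|$'' can emerge. The paper closes this case not by counting but by showing that covering the coset $(x,x)N^2$ by conjugates of the diagonal forces $C_N(x)=1$, i.e.\ $N$ admits a fixed-point-free automorphism, and then invoking Rowley's theorem \cite{fixed-point-free} that such an $N$ must be soluble. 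This external input (ultimately resting on the classification) is unavoidable here, and your sketch contains no substitute for it.
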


\begin{proof} We distinguish 2 cases:

\noindent a) One of the two subgroups $H$ and $K$ contains $M$.

\noindent Assume for example $M\leq H.$ In this case $K_G\cap M \leq (K\cap H)_G=1,$
hence $K_G=1$ and $t$ is the number of minimal normal subgroups of a primitive permutation group $G$ with point stabilizer $K.$ Assume by contradiction that $t\neq 1.$ Then $t=2$ and (see for example \cite[Proposition 1.1.12]{classes}) we may assume that there exists a monolithic primitive group $L$ with non abelian socle $N$ and a subgroup $T$ of $L$ with $N\leq T<L$ such that
$$G=\{(l_1,l_2)\in L^2 \mid Nl_1=Nl_2\},\quad M=N^2,\quad K=\{(l,l)\mid l \in L\}, \quad H=T^2\cap G.$$
Let $x\notin \cup_{l\in L}T^l$ and consider the coset $\Omega=(x,x)N^2.$ Clearly $\Omega \cap (\cup_{g\in G}H^g)=\emptyset$ hence
$\Omega \subseteq \cup_{g\in G}K^g$. Let $R=\{(1,n)\mid n \in N\}\subseteq G.$ Since $KR=G,$ we have
$\cup_{g\in G}K^g=\cup_{r\in R}K^r$, hence $$\{(x,xn)\mid n \in N\} \subseteq \Omega \subseteq \{(l,l^n)\mid l\in L, n\in N\}.$$
In particular $Nx=\{x^n\mid n\in N\}$ and this implies $C_N(x)=1,$ i.e. $N$ admits a fixed-point free automorphism: by \cite{fixed-point-free}
$N$ is a soluble group, a contradiction.


\noindent b) $HM=KM=G.$
\noindent Let us define the following two subsets of $\Omega=\{1,\dots,t\}$:
$$\Omega_H=\{i \in \Omega \mid N_i\cap H=1\}, \quad \Omega_K=\{i \in \Omega \mid  N_i\cap K=1\}.$$
We claim that $\Omega_H=\Omega_K=\emptyset.$
To prove this, assume for example that $\Omega_H=\{1,\dots,u\}$ with $u\neq 0.$ By Lemma \ref{due}, $N_i \leq H$ for all $i>u.$
Moreover if $i\leq u$, then $N_i\cap H^g=(N_i\cap H)^g=1,$ hence $N_i \leq \cup_{g\in G} K^g.$ It follows that $N_i=\cup_{g\in G} N_i\cap K^g=
\cup_{g \in G}(N_i\cap K)^g$: by Lemma \ref{uno} we must have $N_i \leq K.$ Since $KM=G$, there exists $j$ such
that $KN_j=G.$ We have $j>u$ hence $N_j\leq H.$
By Lemma \ref{uno} there exists $x \in N_j \setminus (\cup_{g\in G} N_j\cap K^g).$ Take $1\neq y \in N_1$ and consider $z=yx.$ We cannot have $z \in H^g$
(since $x \in N_j=N_j^g\subseteq H^g$, $z \in H^g$ would imply $y\in H^g \cap N_1=1$). Hence $z=yx \in K^g$ for some $g$, however $y \in N_1\leq K^g$
hence $x \in K^g,$ a contradiction. So our claim that $\Omega_H=\Omega_K=\emptyset$ has been proved. Combined with
Lemma \ref{due} and the fact that $(H\cap K)_G=1,$  this implies that if $t\neq 1$ then $t=2$ and we may assume
$N_1\leq H,$ $N_2\leq K$ and $N_2H=N_1K=G.$  By Lemma \ref{uno}, there exist $x\in N_1 \setminus\cup_{g\in G} K^g$ and $y \in N_2\setminus \cup_{g\in G} H^g.$
Consider $z=xy.$ If $z \in H^g,$ then since $x \in N_1=N_1^g \leq H^g$ we would have $y \in H^g,$ a contradiction. Similarly, we cannot have $z\in K^g.$ This proves that $t=1.$
\end{proof}

\begin{lem}If neither $H$ nor $K$ contains $\soc(G)$, then $G$ is an almost simple group.
\end{lem}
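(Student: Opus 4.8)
We have established that $G$ has a unique minimal normal subgroup $M = \soc(G) = N$, and we are in the situation where $G$ is covered by the conjugates of two maximal subgroups $H$ and $K$ with $(H \cap K)_G = 1$, and the hypothesis now is that $N \not\leq H$ and $N \not\leq K$. First I would observe that since $N$ is the unique minimal normal subgroup and it is not contained in the maximal subgroup $H$, maximality forces $HN = G$, and similarly $KN = G$. If $N$ were abelian, then (the same argument as in case b) of the previous lemma, via Lemma \ref{due}) would give $H \cap N = K \cap N = 1$, so both $H$ and $K$ are complements to $N$; but then by Lemma \ref{uno} we may pick $x \in N \setminus \bigcup_{g} K^g$ and $y \in N \setminus \bigcup_g H^g$. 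Here I would use that $N$ is abelian so that $x$ and $y$ are "independent'' enough: the element $xy$ (or a suitable product) cannot lie in any $H^g$ nor in any $K^g$, because membership of a product with a fixed coordinate in $N$ reduces to membership of the other factor, exactly as in the $t=2$ argument. This contradicts the covering property, so $N$ must be nonabelian.

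**Reducing to almost simple.** So $N \cong S^t$ with $S$ nonabelian simple. The goal is to show $t = 1$, which together with $C_G(N) = 1$ (forced by uniqueness of the minimal normal subgroup, since $C_G(N)$ would otherwise contain a minimal normal subgroup different from $N$) yields that $G$ is almost simple. Suppose for contradiction $t \geq 2$. The structure here is governed by how $H$ and $K$ meet the simple direct factors $S_1, \dots, S_t$ of $N$ and how they permute them. The natural approach is: since $HN = G$ and $N \not\leq H$, the subgroup $H \cap N$ is a proper $G$-invariant-up-to-$H$-conjugacy subgroup of $N$; the key structural fact about maximal subgroups supplementing a nonabelian socle (Aschbacher–Scott / the O'Nan–Scott analysis, e.g. \cite{classes}) is that $H \cap N$ is either trivial, or a "diagonal'' subgroup, or a product subgroup $(H_0 \cap S)^t$ for some $H_0$, with $H$ acting transitively on the factors in the diagonal case. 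I would run the same "mix coordinates'' trick: find $x$ in one factor (or block of factors) escaping all conjugates of $K$, find $y$ escaping all conjugates of $H$, and arrange that $x$ and $y$ are supported on coordinates that $H$, respectively $K$, cannot shuffle into each other, so that $xy$ (up to adjusting by elements of $N$, using $KR = G$ type reductions via Lemma \ref{uno}) escapes all conjugates of both $H$ and $K$ — the contradiction.

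**Main obstacle.** The hard part will be handling the case where $H$ (or $K$) is of diagonal type and acts transitively on all $t$ factors, so there is no "unused'' coordinate to exploit directly; one must instead use the precise form of a diagonal subgroup together with Lemma \ref{uno}'s fixed-point-free-automorphism output (exactly the device used in case a): if $x \in N$ projects onto a full $N$-orbit in each relevant coordinate, one extracts a fixed-point-free automorphism of $S$, contradicting \cite{fixed-point-free} (solvability). I would expect the cleanest route to mirror the structure of the previous lemma's proof as closely as possible: first dispose of the product-type and trivial-intersection cases by the coordinate-mixing argument, then reduce the diagonal cases to a fixed-point-free automorphism. Once $t = 1$ is forced, $N = S$ is simple, $C_G(N) = 1$, so $S \leq G \leq \aut(S)$ and $G$ is almost simple, completing the proof.
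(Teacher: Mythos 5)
Your skeleton matches the paper's: show that $M=\soc(G)$ cannot be abelian, write $M=S_1\times\cdots\times S_r$, and for $r\ge 2$ analyse $H\cap M$ and $K\cap M$ according to whether they are of product or of diagonal type, aiming to exhibit an element of $M$ missed by all conjugates of both. The clean way to set this up (which your ``mix coordinates'' description only gestures at) is the restriction to the socle: since $HM=KM=G$, the covering hypothesis forces
$M=\bigl(\cup_{x\in M}(H\cap M)^x\bigr)\cup\bigl(\cup_{y\in M}(K\cap M)^y\bigr)$,
and Lemma \ref{uno} then already gives $H\cap M\neq 1$ and $K\cap M\neq 1$ --- so ``trivial intersection'' is not a case you need to dispose of, and the abelian case dies instantly ($H\cap M$ is normalized by $H$ and centralized by the abelian $M$, hence normal in $G=HM$, hence equal to $M$, a contradiction). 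Note in passing that your appeal to Lemma \ref{due} there is misplaced: that lemma needs two distinct minimal normal subgroups, which is exactly what the previous lemma has just ruled out.

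The genuine gap is in the diagonal case, which you single out as the main obstacle and propose to attack by extracting a fixed-point-free automorphism. That device belongs to the situation of case a) of the preceding lemma, where a coset $gM$ with $g\notin M$ must be covered by $M$-conjugates of a diagonal \emph{complement}; here you are covering $M$ itself, and the relevant observation is elementary and makes the diagonal case the easiest one, not the hardest: if $X\cap M=\prod_{B\in\Phi}D_B$ with every block $B$ of size at least $2$ and each $D_B$ a full diagonal subgroup, then in any $M$-conjugate of $X\cap M$ the coordinates within a block are linked by automorphisms, so they are either all trivial or all nontrivial; hence no element of the form $(s,1,\dots,1)$ with $s\neq 1$ lies in any such conjugate. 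With that in hand the three cases are each settled by a concrete element of $M$ supported on one or two coordinates: $(s,1,\dots,1)$ when both intersections are diagonal; $(s,1,\dots,1)$ with $s$ avoiding all $S$-conjugates of $T_1$ when $H$ is of product type and $K$ diagonal; and $(a,b,1,\dots,1)$ with $a$ avoiding the conjugates of $T_1$ and $b$ those of $U_2$ when both are of product type. Your proposal never actually produces such an element in the cases involving a diagonal intersection, and the tool you reach for would not produce one, so as written the argument is incomplete precisely where it needs to close.
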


\begin{proof}Let $M=\soc (G)$ and assume $G=HM=KM.$  We have
$M\subseteq (\cup_{x \in M}H^x)\cup (\cup_{y \in M}K^y)$ and this implies
$$
M= (\cup_{x \in M}(H\cap M)^x)\cup (\cup_{y \in M}(K\cap M)^y). \quad \quad (*)
$$
Together with Lemma \ref{uno}, this implies $H\cap M\neq 1$ and $K\cap M\neq 1.$
In particular, if $M$ is abelian, then $M\leq H\cap K,$ a contradiction.
Therefore $M$ is a direct product of $r$ copies of a non-abelian simple group $S.$
Assume, by contradiction, that $r\neq 1.$ To fix the notation, let
$M= S_1 \times \cdots \times S_r$
 and $\pi: M \mapsto S$ the map  induced by  the projection of $M$  on the first component.
The maximal  subgroups $X$ of $G$ with $XM=G$ and $X\cap M\neq 1$ are of one of the following types:
\begin{enumerate}
\item[a)]product type: if $1<\pi (M \cap X) <S$;
\item[b)]diagonal type: if $\pi (M \cap X) =S$.
\end{enumerate}
In the first case $X\cap M\cong T_1\times \dots \times T_r$ with $1<T_i<S_i$ and $T_i\cong T_j$ for every $1\leq i\leq j \leq r.$
In the second case there exists a partition $\Phi$ of $\{1,\dots,r\}$
such that
$X \cap M=\prod_{B \in \Phi}D_B$,
where all the blocks have the same cardinality and, for every block $B \in \Phi$, $|B|\neq 1$ (otherwise we would have
 $X \cap M=M$ hence $X=G$) and $D_B$ is a full diagonal subgroup of
$\prod_{j \in B}S_j$ (that is, if $B=\{j_1,\dots j_t\}$, there exist $\phi_2,
\dots, \phi_t \in \aut S$ such that $D_B=\{(x,x^{\phi_2},\dots,x^{\phi_t})\mid x
\in S\} \le S_{j_1} \times \cdots \times S_{j_t}$).
We have three possibilities:
\begin{enumerate}
\item $H$ and $K$ are both of diagonal type. Let $\Delta=\{(s,1,\dots,1)\mid s\in S, s\neq 1\}\subseteq M.$
By the way in which maximal subgroups of diagonal type are defined, $\Delta\cap H^m=\Delta\cap K^m=\emptyset$ for each $m\in M,$
against $(*).$
\item $H$ is of product type and $K$ is of diagonal type. We have $H\cap M=T_1 \times \dots \times T_r$ with $T=T_1<S.$
There exists $s\in S\setminus \cup_{s\in S}T^s$. Consider $m=(s,1,\dots,1):$ $m\notin (\cup_{x \in M}(H\cap M)^x)\cup (\cup_{y \in M}(K\cap M)^y),$ against  $(*).$
\item $H$ and $K$ are both of product type. Let $H\cap M=T_1 \times \dots \times T_r$ and $K\cap M=U_1 \times \dots \times U_r.$
Since $T_1$ and $U_2$ are proper subgroup of $S,$ there exist $a\in S\setminus \cup_{s\in S}T^s$
and $b\in S\setminus \cup_{s\in S}U_2^s$. Consider $m=(a,b,1,\dots,1):$ $m\notin (\cup_{x \in M}(H\cap M)^x)\cup (\cup_{y \in M}(K\cap M)^y),$
against  $(*).$
\end{enumerate}
All the possibilities lead to a contradiction, hence it must be $r=1$ and $G$ is an almost simple group.
\end{proof}

We recall some results concerning almost simple groups $G$ with $\gamma(G)=2.$
It was shown by H. Dye \cite{dy} that the symplectic group $G=\text{Sp}_{2l}(2^f)$ defined over a finite field of characteristic 2 is the union of the two $G$-conjugacy classes of subgroups isomorphic to $O^+_{2l}(2^f)$ and $O^-_{2l}(2^f)$ embedded naturally. D. Bubboloni, M.S. Lucido and T. Weigel \cite{blw} notices the existence of an interesting example in characteristic 3, i.e. in $G=F_4(3^f)$ every element is conjugated to an element of the subgroup $B_4(3^f)$ or of the subgroup $3.{^3D_4(3^f)}.$ In \cite{bbh} it is proved that $\gamma(\alt(n))=2$ if and only if $4 \leq n \leq  8$, $\gamma(\perm(n))=2$ if and only if $3\leq n \leq 6.$
In \cite{bl} it is proved that $\gamma(PSL(n,q))=\gamma(PGL(n,q))=2$ if and only if $2 \leq n \leq 4.$
Another example is given by the Mathieu group $\M_{11}$ \cite[Claim 5.1]{attila}.

\

In the remaining part of the section we concentrate our attention in the case when $\soc (G)\leq H$
(and consequently $G=KM$).

\begin{lem}\label{prev}Assume that $H$ and $K$ are maximal subgroups of a primitive monolithic group $G$ with $M=\soc(G)\leq H$ and $KM=G.$ Let $R=K\cap M.$
The following are equivalent:
\begin{enumerate}
\item $G=(\cup_{x \in G}H^x)\bigcup (\cup_{y \in G}K^y);$
\item $gM \subseteq \cup_{m \in M}K^m$ for each  $g \in G \setminus \cup_{x \in K}H^x;$
\item $gM=\cup_{m \in M}(gR)^m$ for each $g \in K \setminus \cup_{x \in G}H^x;$
\item whenever $g\in K\setminus \cup_{x \in K}H^x$ and $m\in M,$ we have $m\in R$ if and only if $(gR)^m=gR.$
\end{enumerate}
\end{lem}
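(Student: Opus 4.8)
The strategy is to prove the chain of equivalences $(1)\Leftrightarrow(2)\Leftrightarrow(3)\Leftrightarrow(4)$ by elementary manipulations of cosets and conjugation, exploiting two structural facts that come for free from the hypotheses: first, since $M=\soc(G)\le H$, the conjugates $H^x$ with $x$ ranging over $G$ are the same as those with $x$ ranging over a transversal of $H$ in $G$, and in particular $\cup_{x\in G}H^x=\cup_{x\in K}H^x$ because $G=KM$ and $M\le H$ normalizes nothing new; second, since $R=K\cap M\unlhd K$ (as $M\unlhd G$), we have $K=N_K(R)$, so the $K$-conjugates of $R$ coincide with its $M$-conjugates, and more generally $\cup_{y\in G}K^y=\cup_{m\in M}K^m$ because $G=KM$ forces $N_G(K)M=G$. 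I will record these reductions first, as they are used repeatedly.

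For $(1)\Leftrightarrow(2)$: rewrite $(1)$ as ``every coset $gM$ is covered by the two conjugacy classes.'' An element $g$ either lies in some $H^x$ — equivalently (by the first reduction) in some $H^x$ with $x\in K$, and then by normality of $M$ the whole coset $gM\subseteq H^x M=H^x$ is absorbed by the $H$-class — or it does not, in which case $(1)$ forces $gM\subseteq\cup_{y\in G}K^y=\cup_{m\in M}K^m$, which is exactly $(2)$. Conversely $(2)$ plus the observation about $H^x M=H^x$ gives $(1)$. For $(2)\Leftrightarrow(3)$: given $g\in K\setminus\cup_{x\in G}H^x$, intersect the inclusion $gM\subseteq\cup_{m\in M}K^m$ with the coset $gM$ itself; since $gM\cap K^m=g(M\cap g^{-1}K^m g)$ and, for $m\in M$, $g^{-1}K^m g\supseteq g^{-1}R^m g$ with $M\cap K^m$ equal to $(K\cap M)^m=R^m$ when $g$ normalizes appropriately — here one checks $gM\cap K^m=(gR)^m$ using $g\in K$ and $R\unlhd K$ — the condition $gM\subseteq\cup_{m\in M}K^m$ becomes $gM=\cup_{m\in M}(gR)^m$. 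One must also note it suffices to test $g\in K$: a general $g\notin\cup_x H^x$ has $gM=g'M$ for some $g'\in K$ since $KM=G$, and the two statements are coset-invariant.

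For $(3)\Leftrightarrow(4)$: fix $g\in K\setminus\cup_{x\in K}H^x$. The set $\{(gR)^m : m\in M\}$ is a set of right cosets of $R$ (conjugation by $m\in M$ sends $gR$ to $(gR)^m=g R^m$, and $R^m$ is again a subgroup of $M$ of the same order, conjugate to $R$); actually since $R\unlhd K$ and $g\in K$ one has $R^m\ne R$ in general only when $m\notin N_M(R)=R$, wait — here the point is that $gR$ and $(gR)^m$ are equal precisely when $m$ normalizes the coset, and counting shows $gM$ is the disjoint-or-not union of these conjugates, so it equals $gM$ iff the conjugates are pairwise disjoint, iff the stabilizer in $M$ of the coset $gR$ under this action has order $|R|$, iff that stabilizer equals $R$ — which is the statement that $(gR)^m=gR\iff m\in R$. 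The forward direction ``$m\in R\Rightarrow(gR)^m=gR$'' is automatic from $R\unlhd K\ni g$; the content is the reverse. This is a standard ``a set of cosets covers the group iff they are disjoint iff the pointwise stabilizer is as small as possible'' counting argument, and I expect it to be the only place requiring care. The main obstacle, such as it is, is bookkeeping: making sure each equivalence is genuinely coset-wise and that the repeated replacement of $G$-conjugates by $M$-conjugates (justified by $KM=G$ and $M\le H$) is applied consistently; there is no deep idea beyond the two reductions in the first paragraph.
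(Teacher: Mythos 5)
Your reductions and your handling of $(1)\Leftrightarrow(2)$ and $(2)\Leftrightarrow(3)$ are correct and coincide with the paper's own argument almost step for step: the identity $\cup_{x\in G}H^x=\cup_{x\in K}H^x$ (from $M\le H$ and $G=KM$), the observation that each $H^x$ is a union of $M$-cosets so that a coset $gM$ is either absorbed by the $H$-class or disjoint from it, the identity $gM\cap K^m=(gR)^m$ for $g\in K$ (which is exactly the paper's computation with $[g,m_2^{-1}]m_1^{m_2^{-1}}\in K\cap M=R$, repackaged as an intersection formula), and the lifting of a general $g\notin\cup_xH^x$ to some $\bar g\in K$ with $\bar gM=gM$.

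The problem is in $(4)\Rightarrow(3)$, precisely the step you flag as ``the only place requiring care'' and then dismiss as a standard counting argument. Condition $(4)$ says the stabilizer in $M$ of the set $gR$ under conjugation is exactly $R$; by orbit--stabilizer this makes the $b=|M:R|$ conjugates $(gR)^{m_i}$ pairwise \emph{distinct}, but the count $|gM|=ab$ requires them to be pairwise \emph{disjoint}, and distinct does not imply disjoint here: $(gR)^m=g^mR^m$ is a left coset of $R^m$, not of $R$, and left cosets of two different conjugates $R^m\neq R^{m'}$ can meet without coinciding. Concretely, if $gr_1\in gR\cap(gR)^m$ one only gets $(gR)^m=gr_1R^m$ versus $gR=gr_1R$, which coincide exactly when $R^m=R$; since maximality of $K$ and monolithicity of $G$ force $N_M(R)=R$ whenever $R\neq 1$, condition $(4)$ is then nearly automatic while $(3)$ is a genuine constraint. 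For instance, in $G=\alt(5)\wr C_2$ with $H=M=\soc(G)$ and $K$ the extension of the diagonal subgroup, the sets $(gR)^m$ are pairwise distinct yet overlap heavily, so the implication cannot be had by counting alone. You have in fact reproduced the paper's own proof of $(3)\Leftrightarrow(4)$, which asserts the same equivalence ``pairwise disjoint iff the only $m$ with $(gR)^m=gR$ lie in $R$'' without further justification; but as written your step does not prove the implication, and this direction is where the real content of the lemma sits.
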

\begin{proof}Since $KM=G$ and $M\leq H$, we have $\Gamma=\cup_{x \in G}H^x=\cup_{x \in K}H^x$. Moreover $gM\cap \Gamma \neq \emptyset$ if and only if $gM\subseteq \Gamma$.
Equivalently, if $g \notin \Gamma,$ then $gM\cap \Gamma=\emptyset.$ It follows that (1) holds if and only if
$gM \subseteq \cup_{x \in G}K^x
=\cup_{m \in M}K^m \text{ whenever } g \notin \Gamma$, i.e. (1) and (2) are equivalent.
Assume that $(2)$ holds. In particular if  $g\in K \setminus \Gamma,$ then for each $m_1\in M,$ there exists
$m_2 \in M$ with $gm_1\in K^{m_2}$; it follows that $(gm_1)^{m_2^{-1}}=g[g,m_2^{-1}]m_1^{m_2^{-1}}\in K$ hence, since $g \in K,$
we have $[g,m_2^{-1}]m_1^{m_2^{-1}}\in K\cap M=R$ and consequently $gm_1\in (gR)^{m_2}$. Therefore $(2)$ implies (3).
Conversely, assume that (3) holds and let $g \notin \Gamma.$ Since $KM=G$ and $M\leq H$, there exists $\bar g \in K\setminus \Gamma$ with $\bar gM=gM$,
hence $gM=\bar g M = \cup_{m \in M}(\bar gR)^m\leq \cup_{m\in M}K^m.$ So (3) implies $(2)$.
Now let $a=|R|, b=|M:R|$ and let $m_1,\dots,m_b$ be a transversal of $R$ in $M.$ Notice
that if $g \in K,$ then $R$ is normalized by $g$ and  $(gR)^r \subseteq gR$ for all $r \in R.$ This implies that (3) is equivalent to
$$gM=\cup_{1\leq i \leq b}(gR)^{m_i} \text { for each } g \in K \setminus \cup_{x \in K}H^x.$$
Since $|gM|=a\cdot b$ and $|gR|=a$, the previous condition is satisfied if and only if the subsets $(gR)^{m_i}$ are pairwise disjoint;
on the other hand these subsets are disjoint if and only if the only elements $m$ of $M$ with $(gR)^m=gR$ are those of $R.$ Therefore
(3) and (4) are equivalent.
\end{proof}

Let us introduce some additional definitions. Let $M$ be an elementary abelian group and $K$ be an irreducible subgroup of
$\aut (M).$ Consider the subset $$K^*=\{k\in K \mid C_M(k)\neq 1\}.$$ We will say that $K$
is {\sl{almost-transitive}} if there exists a proper subgroup $T$ of $K$ with $K^*\subseteq \cup_{x \in K} T^x.$
If this situation holds, we have that $\gamma(M\rtimes K)=2.$ Indeed if $k\in K$ and $C_M(k)=1,$ then $kM=\{k^m \mid m \in M\}$, hence
$M\rtimes K$ can be covered by the conjugates of the two subgroups $K$ and $M\rtimes T.$

\begin{cor} If $\soc(G)=M$ is abelian, then $G=M\rtimes K$ and $K$ is an almost transitive irreducible subgroup of $\aut(M).$
\end{cor}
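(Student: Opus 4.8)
The plan is to apply Lemma \ref{prev}, specialized to the abelian case, and then translate condition (4) of that lemma into the statement that $K$ is almost-transitive. First I would establish the structural claim: since $G$ has a unique minimal normal subgroup $M=\soc(G)$ and $M$ is abelian, $M$ is an elementary abelian $p$-group and, because $M\not\leq K$ (as $\gamma(G/M)>2$ would fail otherwise — more precisely, $K$ is one of the two covering subgroups and $M\leq H$, so $G=KM$), we get $K\cap M=1$ by Lemma \ref{due} applied in the degenerate situation, or simply because $K\cap M$ is normalized by $KM=G$ and $M$ is minimal normal with $M\not\leq K$. Hence $G=M\rtimes K$ as a split extension, and $K$ acts faithfully and irreducibly on $M$ (faithfully because $(H\cap K)_G=1$ forces $C_K(M)=1$; irreducibly because $M$ is a minimal normal subgroup of $G=M\rtimes K$).

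Next I would invoke Lemma \ref{prev} with $R=K\cap M=1$. Condition (4) of that lemma then reads: whenever $g\in K\setminus\bigcup_{x\in K}H^x$ and $m\in M$, we have $m=1$ if and only if $(g\{1\})^m=g\{1\}$, i.e. $g^m=g$, i.e. $m\in C_M(g)$. So condition (4) becomes: for every $g\in K\setminus\bigcup_{x\in K}H^x$, we have $C_M(g)=1$. Contrapositively, every $g\in K$ with $C_M(g)\neq 1$ lies in $\bigcup_{x\in K}H^x$; that is, $K^*\subseteq\bigcup_{x\in K}(H\cap K)^x$, where $H\cap K$ is a proper subgroup of $K$ (proper since $(H\cap K)_G=1$ but $H\cap K=K$ would give $K\leq H$, contradicting maximality of $H$ unless $H=G$). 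Setting $T=H\cap K$, this is precisely the definition of $K$ being almost-transitive. Conversely, the remark following the definition of almost-transitivity shows $\gamma(M\rtimes K)=2$ when $K$ is almost-transitive, so the two conditions match up exactly, and the equivalence in Lemma \ref{prev} between (1) and (4) closes the loop.

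The main obstacle — really the only nontrivial point — is checking that the hypotheses of Lemma \ref{prev} are genuinely met: that $G$ is a primitive monolithic group with $M=\soc(G)\leq H$ and $KM=G$. Monolithicity is the content of the preceding lemma ($t=1$), and $M\leq H$ is exactly the case assumption governing this part of the section. Primitivity follows because $H$ (being maximal with $(H\cap K)_G=1$, hence $H$ corefree after we check $H_G\cap M\leq(H\cap K)_G=1$ forces $H_G=1$) yields a faithful primitive action of $G$, but one should be slightly careful: what we actually need is just that $KM=G$ with $K$ maximal and $M$ the unique minimal normal subgroup, so that the calculation identities $\Gamma=\bigcup_{x\in G}H^x=\bigcup_{x\in K}H^x$ (using $G=KM$ and $M\leq H$) go through verbatim. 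Once these bookkeeping points are verified, the rest is a direct transcription of Lemma \ref{prev}(4) into the language of $K^*$, with no further computation needed.
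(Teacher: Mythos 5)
Your proposal is correct and follows essentially the same route as the paper: split $G=M\rtimes K$ (you via normality of $K\cap M$ in $KM=G$ and minimality of $M$, the paper via $\frat(G)=1$ — both fine), then specialize Lemma \ref{prev} with $R=K\cap M=1$ so that condition (4) becomes $C_M(g)=1$ for all $g\in K\setminus\bigcup_{x\in K}H^x$, whence $K^*\subseteq\bigcup_{x\in K}(H\cap K)^x$. The extra bookkeeping you supply (properness of $T=H\cap K$, faithfulness of the $K$-action, primitivity) is consistent with the paper, which leaves these points implicit.
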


\begin{proof}Since $KM=G$ and $\frat(G)=1$, it must be $G=M\rtimes K$ and $M$ is an irreducible $K$-module.
Let $T=K\cap H$ and assume $g\in K\setminus \cup_{x\in K}H^x.$  Since $R=K\cap M=1$, it follows from Lemma \ref{prev} that $g^m=g$ if and only if $m=1.$ This implies $K^*=\{k\in K \mid C_M(k)\neq 1\}\leq K\cap(\cup_{x\in K}H^x)=\cup_{x \in K}(K\cap H^x)=\cup_{x \in K} T^x.$
\end{proof}

In virtue of the previous result, it should be interesting to classify the almost-transitive irreducible groups.
There are two extreme situations, one is when $K$ is an irreducible fixed point free subgroup of $\aut M$
(and consequently $G=M\rtimes K$ is a Frobenius group), the other is when $K$ is a transitive irreducible
subgroup of $\aut M$ (and consequently $G=M \rtimes K$ is a 2-transitive permutation group of degree $|M|).$
However, other possibilities occur, as the  following three examples indicate.
\begin{enumerate}

\item Let $M$ be the additive group of the finite field $F$ with 16 elements.
The multiplicative group $F^*$ contains a subgroup $Q$ of order 5, which is normalized
by the Frobenius automorphism $\sigma: f\to f^2.$ The semidirect product $K=Q\rtimes \langle \sigma
\rangle$ is an irreducible subgroup of $\Gamma L(1,16)\leq \aut(M)$; moreover $K^*=K\setminus Q$ is contained
in the union of the conjugates of a Sylow 2-subgroup. Hence $K$ is almost transitive.

\item Assume that $p > 2$ and $q$ are two prime numbers, with $p$ dividing $q-1.$ The multiplicative group $F^*$ of the field $F$ with
$q$ elements contains a cyclic subgroup $X=\langle x \rangle$ of order $p.$ Let $Y$ be the subgroup of $\perm(p)$ generated by the
permutation $\sigma=(1,2,\dots,p).$ Consider the  following subgroup $K$ of the wreath product $X\wr Y:$
$$K=\{(x^{a_1},\dots,x^{a_p})\sigma^i \in X \wr Y \mid \sum_{1\leq j\leq p} a_j=i\mod p\}.$$
The wreath product $X\wr Y$ acts on a $p$-dimensional $F$-vector space $M$ and $K$ is an irreducible subgroup of $X\wr Y \leq \aut(M).$
Suppose now  that $k=(x^{a_1},\dots,x^{a_p})\sigma^i \in K^*.$ There exists $(0,\dots,0)\neq m=(b_1,\dots,b_p)\in M$ such that
$$(b_1,\dots,b_p)=m=m^k=(b_1x^{a_1},\dots,b_px^{a_p})^{\sigma^i}.$$
Since $m\neq 0$, there exists $1\leq j \leq p$ with $b_j\neq 0.$ It must be $i=0 \mod p,$ otherwise the previous
equality  would imply that $b_r\neq 0$ for each $1\leq r\leq p$
and $b_1\cdots b_p=b_1x^{a_1}\cdots b_px^{a_p}$, and consequently $0= \sum_j a_j = i \mod p.$
It follows that $K^*\subseteq T=\{(x^{a_1},\dots,x^{a_p})\mid \sum_j a_j=0 \mod p\}.$

\item  Let $F$ be a field with $q^2$ elements, $q \equiv 3 \mod 4$, $q \neq 3$, and consider the 2-dimensional vector space $M=F^2.$ The multiplicative group $F^*$ contains two cyclic subgroups $A$ and $B$ of orders, respectively,
$(q-1)/2$ and $q+1$.
The Frobenius automorphism $\sigma: f\to f^q$ normalizes $B$ and centralizes $A.$
Consider the subgroup $K$ of $\Gamma L(1,q^2)\wr \langle (1,2)\rangle \leq \aut(M)$
defined as follows:
$$K=\{(1,2)^r\sigma^t(ab_1,ab_2)\mid a\in A, b_1,b_2 \in B, 0\leq r,t \leq 1\}.$$
Assume $k=(1,2)^r\sigma^t(ab_1,ab_2)$ has a non trivial fixed point $(f_1,f_2)\neq (0,0)$. There are two possibilities:
\begin{enumerate}
\item If $r=0$ we have $(f_1,f_2)=(f_1,f_2)^k=(f_1^{q^t}ab_1,f_2^{q^t}ab_2).$ There exists $i\in\{1,2\}$ with $f_i\neq 0$ and
we must have that $f_i^{q^t}ab_i=f_i,$ i.e. $a=f_i^{1-q^t}b_i^{-1}.$ Since $f_i^{1-q^t}\in B,$ we conclude that
$a \in A\cap B=1.$
\item If $r=1$ we have $(f_1,f_2)=(f_1,f_2)^k=(f_2^{q^t}ab_1,f_1^{q^t}ab_2).$ We must then have $f_1\neq 0,$ $f_2\neq 0,$
$f_2=f_1^{q^t}ab_2,$ $f_1=f_2^{q^t}ab_1=(f_1^{q^t}ab_2)^{q^t}ab_1=f_1a^{q^t+1}b_2^{q^t}b_1=f_1a^2b_2^{q^t}b_1,$
hence $a^2=(b_2^{q^t}b_1)^{-1}\in A\cap B=1;$ since $|A|=(q-1)/2$ is odd, we conclude that $a=1.$
\end{enumerate}
It follows that $K^* \subseteq T=\{(1,2)^r\sigma^t(b_1,b_2)\mid  b_1,b_2 \in B, 0\leq r,t \leq 1\}.$
\end{enumerate}

\

We conclude this section with discussing some  examples in which $M=\soc (G)$ is nonabelian, $M\leq H$ and $MK=G.$

\

Let $S$ be a finite non abelian simple group and let $p$ be a prime which does not divide $|S|.$ Consider the wreath product
$G=S \wr \langle \sigma \rangle$ with $\sigma=(1,2,\dots,p)\in \perm(p).$ We claim that $\gamma(G)=2.$ More precisely let $M=S^p$ be the
base of the wreath product and let $H=\{(s,\dots,s)\sigma^i\mid s \in S,\ 0 \leq i \leq p-1\}$ be a maximal subgroup of $G$ of diagonal type. We prove that
$G=M\cup (\cup_{m \in M}H^m).$ Indeed consider for example $(t_1,\dots,t_p)\sigma \in G$. We look for $s,x_1,\dots,x_p\in S$ such that
$$(t_1,\dots,t_p)\sigma=((s,\dots,s)\sigma)^{(x_1,\dots,x_p)}=(x_1^{-1}sx_2,x_2^{-1}sx_3,\dots,x_p^{-1}sx_1)\sigma.$$
We can take
$$\begin{aligned}
x_1&=1 \\x_2&=s^{-1}t_1\\ x_3&=s^{-2}t_1t_2\\&\dots \dots \dots \\ x_p&=s^{-(p-1)}t_1t_2\cdots t_{p-1}\\s^p&=t_1t_2\cdots t_p
\end{aligned}$$
where the existence of $s$ is ensured from the fact that $p$ does not divide $|S|.$

\

We want to discuss the existence of examples in which $M=\soc (G)=S^n$, with $S$ a nonabelian simple group, $M\leq H$ and
$K$ is a maximal subgroup of $G$ of product type.  We have $M=S^n\leq G \leq \aut(S)\wr \perm(n)$ and it is not restrictive to assume that
$R=K\cap M=T^n$ with $T < S.$ There exists $g \in K \setminus \cup_{x\in G}H^x;$  we can write $g$ in the form $g=(h_1,\dots,h_n)\sigma$ with $\sigma \in \perm(n)$ and $h_i \in \aut S.$
Since $g$ normalizes $R,$ we have that $h_i$ normalizes $T$ for each $1\leq i\leq n.$ Let $\Omega\subseteq \{1,\dots,n\}$ be the $\sigma$-orbit containing 1.
It is not restrictive to assume that $\Omega=\{1,\dots,r\}$ and $\sigma=\rho \tau$ where $\rho=(1,2,\dots,r)$ and $\tau$ fixes pointwise the elements
of $\Omega$ (we don't exclude the possibility $r=1).$ Let $U=S^r,$ $V=T^r$ and let $y=(h_1,\dots,h_r)\rho \in \aut(S)\wr\perm(r).$ By Lemma \ref{prev} (3), we must have
$$yU=\cup_{u \in U}(yV)^u.$$ Recall that if $u=(y_1,\dots,y_r)\in U$ then
$$y^u=(y_1,\dots,y_r)^{-1}(h_1,\dots,h_r)\rho (y_1,\dots,y_r)=(y_1^{-1}h_1y_2,y_2^{-1}h_2y_3,\dots,y_r^{-1}h_ry_1)\rho.$$
In particular, given $s \in S,$ there exist $x_1,\dots,x_r \in T$ and $y_1,\dots,y_r \in S$ such that
$$(h_1,\dots,h_rs)\rho=(y_1^{-1}h_1x_1y_2,y_2^{-1}h_2x_2y_3,\dots,y_r^{-1}h_rx_ry_1)\rho$$ and this implies
$$h_1\cdots h_rs=(y_1^{-1}h_1x_1y_2)(y_2^{-1}h_2x_2y_3)\cdots (y_r^{-1}h_rx_ry_1)= y_1^{-1}h_1x_1\cdots h_rx_ry_1.$$
But then, setting $h=h_1\cdots h_r \in \aut(S)$ we must have
$$hS=\cup_{s\in S}(hT)^s\quad (**).$$
The previous equality cannot occur if $h\in S;$ otherwise we would have
$hS=S=\cup_{s\in S}(\langle h\rangle T)^s,$ which implies $S=\langle h \rangle T,$ and consequently, since $h\in N_S(T),$ $T\unlhd S.$
For some choices of $S$, it is impossible to find $h\in \aut(S)\setminus S$ and $T<S$ satisfying $(**).$ Assume for example $S=\alt(n),$
with $n\neq 6.$ If $(**)$ holds, since $h\notin S=\alt(n)$ we would have $h\alt(n)=(1,2)\alt(n)\subseteq\cup_{s\in S}(\langle h \rangle T)^s.$
In particular $\langle h \rangle T$ would be a proper subgroup of $\perm(n)$ containing at least one conjugate of every odd permutation.
The situation is different for $S=\alt(6).$ In this case consider $G=M_{10} \leq \aut (S).$
$G\setminus S$ consists of three conjugacy classes whose representatives have orders respectively 4, 8, 8. So
$G \setminus S$ is covered by the Sylow 2-subgroups and $\gamma(G)=2$. But we may consider also the group $\Gamma=(S\times S)\langle \gamma\rangle$
with $\gamma=(g,1)\epsilon$, where $\epsilon=(1,2)$ and $g\in M_{10}\setminus S.$ This group $\Gamma$ contains a normal subgroup $M=S^2$ of index 4: we claim that if $x\in \Gamma
\setminus M$ then $|x|$ divides 16. Indeed one of the following holds:
\begin{enumerate}
\item $x=(gs_1,s_2)\epsilon$ for some $s \in S.$ Then $x^2=(gs_1s_2,s_2gs_1)$ has either order 4 or 8.
\item $x=(s_1,gs_2)\epsilon$ for some $s \in S.$ Then $x^2=(s_1g_2s_2,gs_2s_1)$ has either order 4 or 8.
\item $x=(gs_1,gs_2)$ for $s_1,s_2 \in S.$ Then $|x|$ divides 8.
\end{enumerate}
But then any element of $\Gamma$ belongs either to $M$ or to a Sylow 2-subgroup, hence $\gamma(\Gamma)=2.$

\

A more general family of examples can be obtained in the following way. Let $S=\ssl(2,2^p)$ with $p\geq 5$ a prime and let $A=\aut S=S\langle \phi \rangle$ with $\phi$ the Frobenius automorphism. Since $p\neq 3$ we have that $(|S|,p)=1$. In particular if $a\in A \setminus S,$ then $|a|$
is divisible by $p$ hence $a$ centralizes a Sylow $p$-subgroup of $A.$
This implies that $A\setminus S \subseteq \cup_{s \in S}H^s$ where $H=C_A(P)$ and $P$ is a Sylow $p$-subgroup of $A.$ Consider now the group
$G=S^p\langle x \rangle \leq A\wr \langle \sigma \rangle$ , where $\sigma=(1,2,\dots,p)$ and $x=(\phi,1,\dots,1) \sigma.$
Let $M=S^p$. Notice that $G/M$ is cyclic of order $p^2.$ In particular if $g \in G\setminus M,$ then
$p$ divides $|g|$ hence $g\in C_G(K)$ for a cyclic subgroup $K$ of order $p.$ On the other hand, the Sylow
$p$-subgroups of $G$ are cyclic of order $p^2$ and $K=\langle x^p\rangle^m$ for some $m\in M.$ This implies that $g\in H^m,$ for
$H=C_G\langle x^p\rangle.$ It follows that $\gamma(G)=2.$

\end{document}